%
%
%


\documentclass{amsproc}
\usepackage{amsfonts,amscd,amssymb,amsmath,amsthm}
\usepackage{graphicx,caption,subcaption,mathrsfs,appendix,centernot}
\usepackage{xparse,tikz}
\usepackage{epstopdf}
\usepackage[T1]{fontenc}

\usepackage{ctable,color}
\usetikzlibrary{arrows,chains,matrix,positioning,scopes,patterns}

\numberwithin{equation}{section}

\begin{document}

\newtheorem{theorem}{Theorem}[section]
\newtheorem{lemma}[theorem]{Lemma}
\newtheorem{lm}[theorem]{Lemma}
\newtheorem{corollary}[theorem]{Corollary}
\newtheorem{conjecture}[theorem]{Conjecture}
\newtheorem{cor}[theorem]{Corollary}
\newtheorem{proposition}[theorem]{Proposition}
\newtheorem{prop}[theorem]{Proposition}
\theoremstyle{definition}
\newtheorem{definition}[theorem]{Definition}
\newtheorem{example}[theorem]{Example}
\newtheorem{claim}[theorem]{Claim}
\newtheorem{remark}[theorem]{Remark}


\newcommand{\R}{\mathbb{R}}
\newcommand{\C}{\mathbb{C}}
\newcommand{\Z}{\mathbb{Z}}
\newcommand{\Q}{\mathbb{Q}}
\newcommand{\E}{\mathbb E}
\newcommand{\N}{\mathbb N}
\newcommand{\X}{\mathbf X}

\newcommand{\del}{\ensuremath{\partial}}
\newcommand{\Def}{\ensuremath{:=}}

\newcommand{\TODO}[1]{\textbf{TODO:}{#1}}


\renewcommand{\Pr}{\mathbb{P}}
\newcommand{\as}{\text{a.s.}}
\newcommand{\Prob}{\Pr}
\newcommand{\Exp}{\mathbb{E}}
\newcommand{\expect}{\Exp}
\newcommand{\1}{\mathbf{1}}
\newcommand{\prob}{\Pr}
\newcommand{\pr}{\Pr}
\newcommand{\filt}{\mathscr{F}}
\DeclareDocumentCommand \one { o }
{%
\IfNoValueTF {#1}
{\mathbf{1}  }
{\mathbf{1}\left\{ {#1} \right\} }%
}
\newcommand{\Bernoulli}{\operatorname{Bernoulli}}
\newcommand{\Binomial}{\operatorname{Binom}}
\newcommand{\Beta}{\operatorname{Beta}}
\newcommand{\Binom}{\Binomial}
\newcommand{\Poisson}{\operatorname{Poisson}}
\newcommand{\Exponential}{\operatorname{Exp}}
\newcommand{\Unif}{\operatorname{Unif}}
\newcommand{\lawequals}{\overset{\mathcal{L}}{=} }


\newcommand{\Deg}{\operatorname{deg}}
\DeclareDocumentCommand \vso { o }
{%
\IfNoValueTF {#1}
{\mathcal{V}  }
{\mathcal{V}\left( {#1} \right) }%
}

\DeclareDocumentCommand \deg { O{ } }
{ \operatorname{deg}_{ #1 }}
\newcommand{\oneE}[2]{\mathbf{1}_{#1 \leftrightarrow #2}}
\newcommand{\ebetween}[2]{{#1} \leftrightarrow {#2}}
\newcommand{\noebetween}[2]{{#1} \centernot{\leftrightarrow} {#2}}
\newcommand{\Gap}{\ensuremath{\tilde \lambda_2 \vee |\tilde \lambda_n|}}
\newcommand{\dset}[2]{\ensuremath{ e({#1},{#2})}}


\newcommand{\Htwo}{ \mathbb{H}}
\newcommand{\Hd}{ \mathbb{H}^d}
\newcommand{\Etwo}{ \mathbb{E}}
\newcommand{\GWT}{ \mathcal{T}}
\newcommand{\HB}{ B_{\Htwo}}
\newcommand{\MB}{ B_{\mathbb{M}}}
\newcommand{\LB}{ B_{\mathscr{L}}}
\newcommand{\dH}{ d_{\Htwo}}
\newcommand{\dM}{ d_{\mathbb{M}}}
\newcommand{\dL}{ d_{\mathscr{L}}}
\newcommand{\PV}{ \mathcal{V}}
\newcommand{\VD}{ \mathscr{G}^\lambda }
\newcommand{\diamM}{ \operatorname{diam}_{\mathbb{M}} }
\newcommand{\diamH}{ \operatorname{diam}_{\Htwo} }
\newcommand{\dE}{d_{\Etwo}}
\newcommand{\Lattice}{ \Gamma }
\DeclareDocumentCommand \Vol { O{G} }{ \operatorname{Vol}_{#1}}
\newcommand{\VolH}{ \operatorname{Vol}_{\Htwo}}
\newcommand{\VolE}{\operatorname{Vol}_{\Etwo}}
\newcommand{\VolV}{\operatorname{Vol}_{\HPV}}
\newcommand{\VolD}{\operatorname{Vol}_{\HPD}}
\newcommand{\VolM}{ \operatorname{Vol}_{\mathbb{M}}}
\newcommand{\convH}{ \operatorname{conv}_{\Htwo}}
\newcommand{\HPV}{ \mathscr{V}^\lambda}
\newcommand{\HPD}{ \mathscr{D}^\lambda}

\newcommand{\Del}{ \operatorname{Del}}
\newcommand{\Star}{ \operatorname{St}}
\newcommand{\GF}{\mathcal{G}_f}
\newcommand{\DeltaH}{\Delta_{\Htwo}}
\newcommand{\DeltaE}{\Delta_{\Etwo}}
\newcommand{\angleH}{\angle_{\Htwo}}
\newcommand{\angleE}{\angle_{\Etwo}}

\newcommand{\CDisc}{ \operatorname{CD}_{\Htwo}}
\newcommand{\CCtr}{ \operatorname{CC}_{\Htwo}}

\newcommand{\PPP}{ \Pi^\lambda }

\DeclareDocumentCommand \TFX { O{i} O{\pi_{\X}} O{f}  }
{ \Delta_{ {#2}, {#3} }({#1})}

\DeclareDocumentCommand \Filt { O{n} }
{ \mathscr{F}_{{#1}} }

\DeclareDocumentCommand \isol { O{i} O{S} }
{ \Delta_{#1} \left( {#2} \right) }

\newcommand{\aec}{i^*}
\DeclareDocumentCommand \islands {  O{i} }
{ \operatorname{IS}_{ {#1 }} }

\newcommand{\BHF}{ \mathcal{B}}

\title[Distributional lattices]{Distributional Lattices on Riemannian symmetric spaces}

\author{Elliot Paquette}
\address{Department of Mathematics, The Ohio State University}
\email{paquette.30@osu.edu}




\subjclass[2010]{Primary 60G55; 22E40; 43A07}

\date{\today}

\begin{abstract}

  A Riemannian symmetric space is a Riemannian manifold in which it is possible to reflect all geodesics through a point by an isometry of the space.  On such spaces, we introduce the notion of a distributional lattice, generalizing the notion of lattice.  Distributional lattices exist in any Riemannian symmetric space: the Voronoi tessellation of a stationary Poisson point process is an example.  We show that for an appropriate notion of amenability, the amenability of a distributional lattice is equivalent to the amenability of the ambient space.  Using this equivalence, we show that the simple random walk on any nonamenable distributional lattice has positive embedded speed.  For nonpositively curved, simply connected spaces, we show that the simple random walk on a Poisson--Voronoi tessellation has positive graph speed by developing some additional structure for Poisson--Voronoi tessellations.
\end{abstract}

\maketitle

\section{Introduction}

\subsection*{Riemannian symmetric spaces}

A Riemannian symmetric space $\mathbb{M}$ is a connected Riemannian manifold where at each point $p,$ there is an isometry $\sigma_p$ of $\mathbb{M}$ that fixes $p$ and whose differential at $p$ is multiplication by $-1.$ 

Riemannian symmetric spaces provide many excellent examples of nonpositively and positively curved spaces, which include:
\begin{enumerate}
  \item the Euclidean spaces,
  \item the spheres $\mathbb{S}^n$ in Euclidean space,
  \item the real, complex, and quaternionic hyperbolic spaces ($\Htwo^d, \C\Htwo^d, \Htwo\Htwo^d$ respectively, see \cite[Chapter 10]{BridsonHaefliger} for a comprehensive treatment) of any dimension,
  \item $\text{SL}_d(\R)/\text{SO}_d(\R)$ for any $d \geq 1,$ which can be identified with positive definite matrices modulo scalars,
  \item Riemannian products of any of the above examples, such as $\Htwo^d \times \R^k$ or $\Htwo^2 \times \Htwo.$
\end{enumerate}
All of the examples listed except for the spheres are nonpositively curved.
Additionally, Riemannian symmetric spaces decompose nicely: any simply connected Riemannian symmetric space decomposes as a Riemannian direct product $\mathbb{M}_1 \times \R^d \times \mathbb{M}_2$ where $\mathbb{M}_1$ is nonpositively curved and $\mathbb{M}_2$ is compact (see \cite[V,Proposition 4.2]{Helgason}).  

When a Riemannian symmetric space has no factors of $\R^k$ and no compact factors in its de Rham decomposition (its decomposition as a Riemannian product into irreducible factors), it is called a Riemannian symmetric space of noncompact type.  Such spaces can be identified as quotient spaces $G/K$ where $G$ is a semisimple Lie group with trivial center and $K$ is a maximal compact subgroup. 

\subsection*{Lattices and Voronoi tessellations}

All Riemannian symmetric spaces $\mathbb{M}$ are diffeomorphic to quotient spaces $G/K$ where $G$ is the isometry group of $\mathbb{M}$ and $K$ is a stabilizer of some point (see \cite[V, Theorem 3.3]{Helgason}).  Going forward, we let $o \in \mathbb{M}$ be an arbitrary base point, and let $K$ refer to the stabilizer of $o$ in the isometry group of $\mathbb{M}.$  The group $G$ has a locally compact Lie group structure and so has a (left) Haar measure. 

Generally, a discrete subgroup $\Gamma$ of a Lie group is called a \emph{lattice} if there is a measurable set of coset representatives of $\Gamma \setminus G$ so that $\Gamma \setminus G$ has finite Haar measure.  When $G$ is the isometry group of $\mathbb{M},$ one can identify an equivalent condition to $\Gamma$ in terms of Voronoi tilings.  Every simply connected Riemannian symmetric space has many lattices \cite{Borel}.  In particular, each Riemannian symmetric space of noncompact type has many lattices.

The orbit $\Gamma \cdot o$ forms a closed discrete subset of $\mathbb{M}.$  Hence, it is possible to define the \emph{Voronoi tessellation} of $\mathbb{M}$ with \emph{nuclei} $\Gamma \cdot o.$  In general, for a closed discrete set $S \in \mathbb{M},$ the Voronoi tessellation with nuclei $S$ is a decomposition of $\mathbb{M}$ into \emph{cells}, which for a given point $x \in S$ is defined by
\[
  \vso(x ; S) = \left\{ y \in \mathbb{M} : \dM(x,y) = \min_{s \in S} \dM(s,y) \right\}.
\]
Note that $\Gamma$ permutes the voronoi cells $\{\vso( \gamma \cdot o; \Gamma \cdot o) : \gamma \in \Gamma\},$ and hence all of them have equal volume. 
On account of the compactness of the stabilizer $K,$ it is readily checked that the finiteness of the Riemannian volume of $\vso(o)$ is equivalent to the finiteness of $\Gamma \setminus G$ under the Haar measure of $G.$

\subsection*{Stationary point processes}

A (simple) point process $\mathcal{P}$ on a complete separable metric space $\mathcal{X}$ is a probability measure on countable subsets of $\mathcal{X}$ which have finite intersection with any compact set.  Often these are considered as random locally--finite, integer--valued measures by summing point masses at each of these points.  In this way, we can view $\mathcal{P}$ as a random element of $\mathcal{M}_\mathcal{X},$ the space of locally finite measures on $\mathcal{X}.$  When $\mathcal{X}$ is a homogeneous space, we say that $\mathcal{P}$ is a stationary point process if $\mathcal{P} \lawequals \tau(\mathcal{P})$ for any isometry $\tau$ of $\mathcal{X}.$

We will only consider point processes that have finite intensity, which is to say that for any compact $A \subset \mathcal{X}$
\[
  \Lambda(A) := \Exp | A \cap \mathcal{P} |  < \infty.
\]
Note that for stationary point processes on a Riemannian homogeneous space $\mathbb{M}$, the \emph{intensity} measure $\Lambda$ will then be an invariant Radon measure, and hence it is a multiple of the Riemannian volume measure by the uniqueness of Haar measure.  We will let $\lambda$ denote this multiple.

The Palm process $\mathcal{P}_o,$ which is again a point process, has distribution which can be considered as the distribution of $\mathcal{P}$ conditioned to have a point at $o.$  For a general discussion of the theory, see \cite[Ch 6]{Kallenberg} or \cite[II.13]{VereJones}.  More formally, it is defined as a family of point processes $\left\{ \mathcal{P}_o, o\in \mathcal{X} \right\}$ so that for any nonnegative measurable function $f : \mathcal{X} \times \mathcal{M}_\mathcal{X} \to \R$
\[
  \Exp\left[
    \int_{\mathcal{X}} f(o, \Xi) \Xi(do)
  \right]
  =
 \Exp\left[
    \int_{\mathcal{X}} f(o, \Xi_o) \Lambda(do)
  \right].
\]
Note that $\left\{ \mathcal{P}_o, o\in \mathcal{X} \right\}$ is only well--defined up to $\Lambda$--null sets.
For a stationary point process, one version of the Palm process is given by $\left\{ \tau_{o,x}(\mathcal{P}_o), x \in \mathcal{X} \right\}$ where $\tau_{o,x}$ is an isometry of the space mapping $o$ to $x.$  In particular, for a stationary point process, the palm processes $\mathcal{P}_x$ are meaningful for all points $x \in \mathcal{X}.$ 

\begin{figure}
  \begin{subfigure}[t]{0.45\textwidth}
    \includegraphics[width=\textwidth]{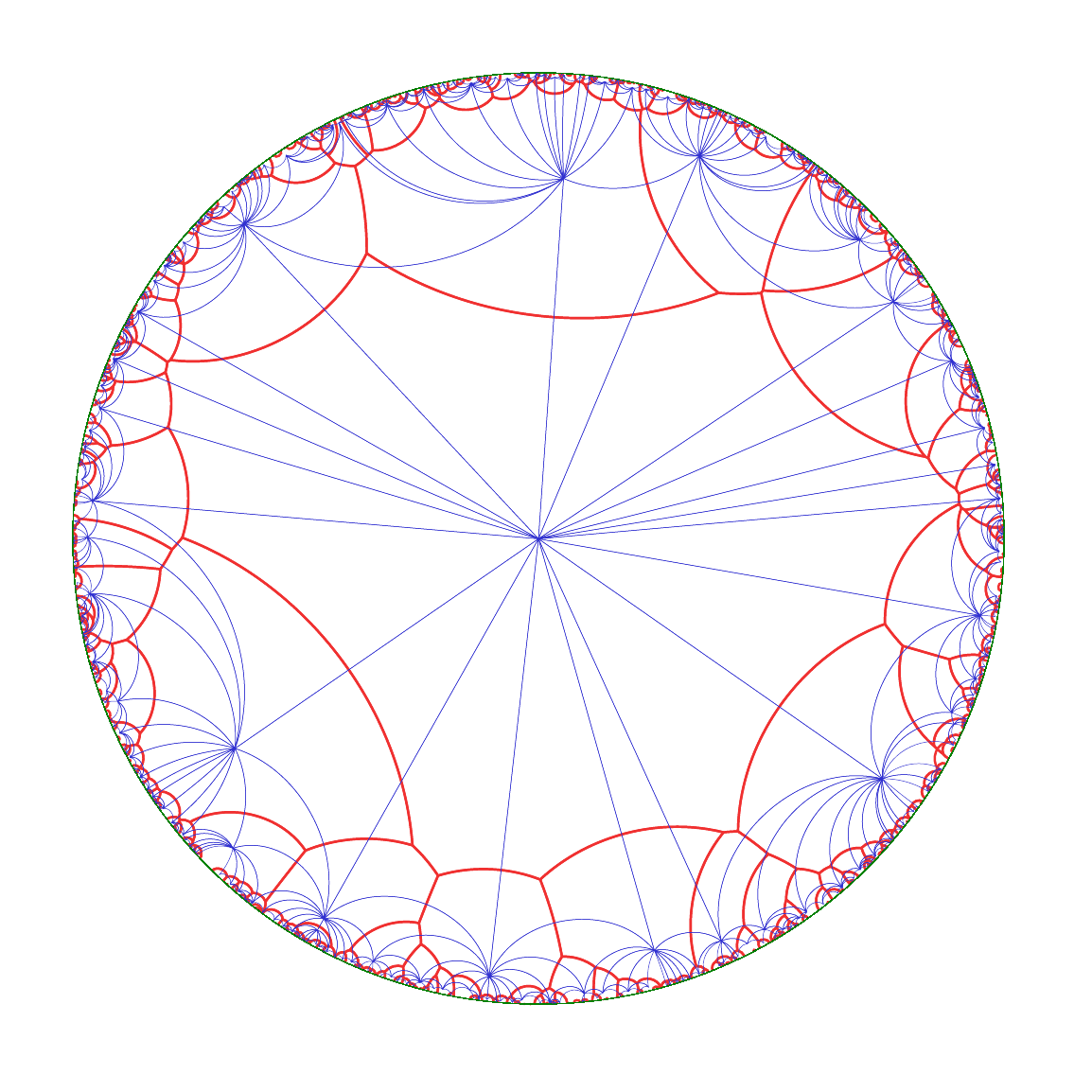}
    \caption{The hyperbolic GAF (see \cite[Ch. 5]{HHPV}), approximated by the roots inside the disk of a Kac polynomial of degree $1000,$ of which there are in expectation $500.$}
  \end{subfigure}
  \begin{subfigure}[t]{0.45\textwidth}
    \includegraphics[width=\textwidth]{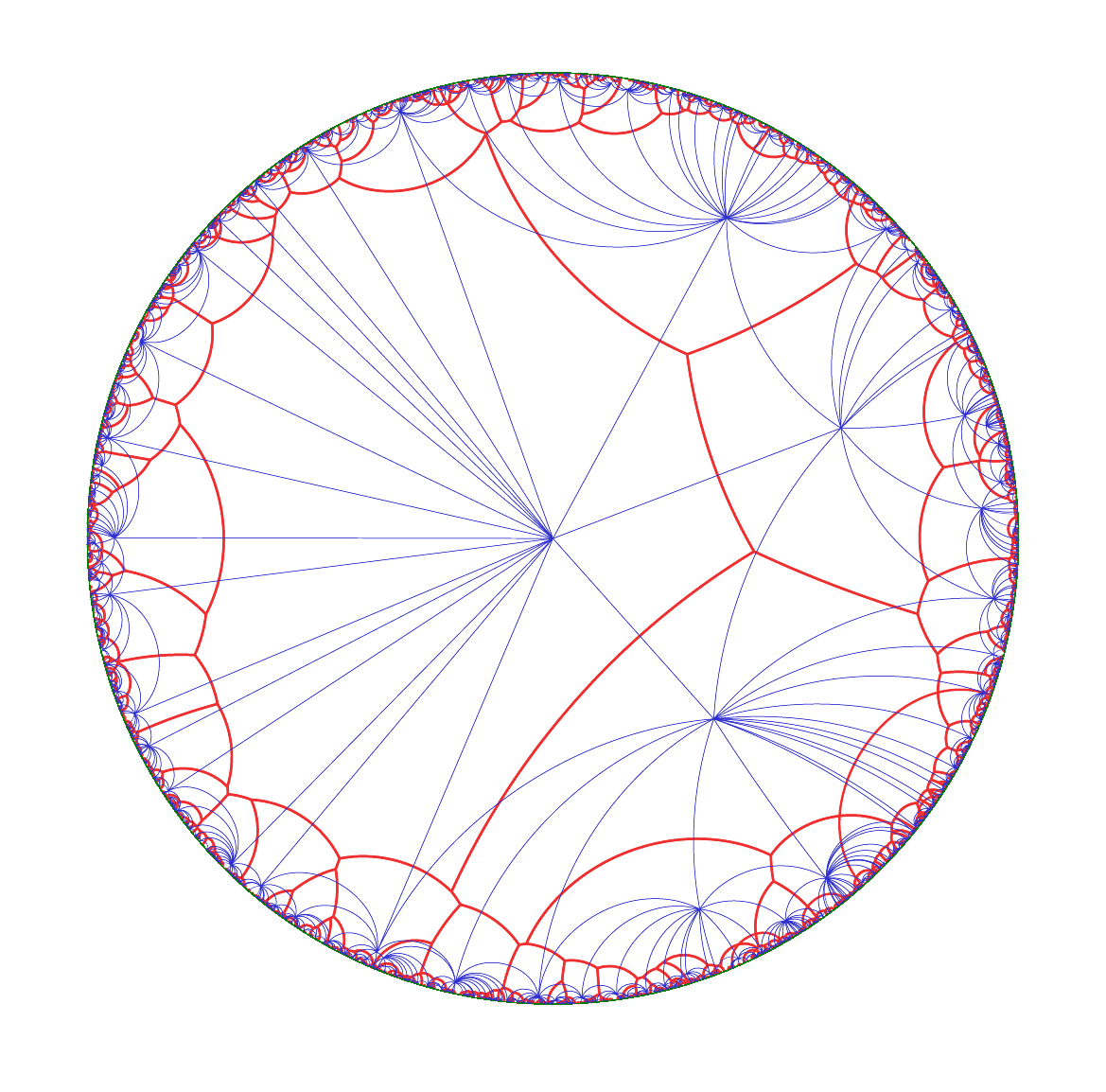}
    \caption{A Poisson--Voronoi tiling with intensity matching the hyperbolic GAF, truncated in a disk of radius $r$ where $r$ is chosen to have $500$ points in expectation (matching the Kac polynomial).} 
  \end{subfigure}

  \caption{These are simulations depicting the hyperbolic tessellations for the palm processes of two different stationary point processes.  They are chosen to have matching intensities.  The GAF is statistically more rgular than the Poisson process.  The points appear as the vertices of the triangulation (the blue graph, where color is available), and the dual tessellation is the Voronoi tessellation. 
  }
  \label{fig:simulation}
\end{figure}

\begin{example}[Poisson--processes]

  While some of the results in this paper are applicable to general palm processes of stationary point processes, we are able to show much more for the case that $\mathcal{P}$ is an stationary Poisson process.  The \emph{Poisson--process with intensity $\lambda \cdot dV$} where $dV$ is the Riemannian volume measure and $\lambda > 0$ is a parameter.  Such a process can be uniquely determined by the condition that for any disjoint collection of compact sets $A_1,A_2,\dots,A_k,$ the cardinalities $\left\{ |\mathcal{P} \cap A_i| \right\}_1^k$ are independent Poisson--distributed variables with means $\lambda V(A_i).$  One can further see that conditional on the cardinality of $|\mathcal{P} \cap A|,$ the points of $\mathcal{P} \cap A$ are independent and uniformly distributed over $A.$

The Palm process $\mathcal{P}_o$ has the attractive feature that 
$\mathcal{P}_o \lawequals \{o\} \cup \mathcal{P},$
which is to say conditioning the Poisson process to have a point at the origin leaves the distribution unchanged on $\mathcal{X} \setminus \{ o\}.$  

\end{example}

\begin{example}[Translated--lattices]
  
  The lattice $\Gamma \cdot o$ forms another point process, albeit completely deterministic.  If one randomizes the process by taking $g \Gamma \cdot o$ where $g$ is distributed according to Haar measure on $\Gamma \setminus G$ then the resulting point process is invariant under all isometries by virtue of the $G$ invariance of the Haar measure on $\Gamma \setminus G.$  The palm process that results from conditioning $o$ to be at the origin has the distribution of $k \Gamma \cdot o,$ where $k$ is distributed according to Haar measure on $K,$ the stabilizer of $o.$ 

\end{example}

\begin{example}[Determinantal--processes]

  A more exotic class of examples are the determinantal processes.  In two--dimensions there are relatively well--studied examples of stationary determinantal point processes for all of the constant curvature spaces.  These examples have the additional interpretation as being zero sets of Gaussian random analytic functions.  See \cite[Chapter 2]{HHPV} for more details.  Some properties of the Voronoi diagrams have been studied for Ginibre process \cite{Goldman}.

  These processes exist in much greater generality than suggested.  Many stationary determinantal processes exist in any Riemannian symmetric space.  In effect, it is possible to define a determinantal process for any isometry--invariant closed subspace $S$ of $\text{L}^2(dV)$ for which the evaluation maps $f \mapsto f(z)$ are bounded linear functionals (for example spaces of certain harmonic functions).  On such spaces, one can find a reproducing kernel $\mathcal{K}(z,w) = \sum_{k=1}^\infty \psi_k(z)\psi_k(w),$ for a complete orthonormal system $\left\{ \psi_k \right\}_1^\infty$ of $S.$  The intensity of points will be given $\mathcal{K}(x,x)dV(x).$  By invariance, it follows that $\mathcal{K}(x,x)$ is constant.  This construction leads to the canonical examples discussed in the prior paragraph.  A particularly nice example is given by the Bergman kernel associated to any bounded symmetric domain in $\C^n$ (see \cite[VII.3,7]{Helgason} for details).  The hyperbolic GAF depicted in Figure~\ref{fig:simulation} is such an example, with the domain being the unit disk inside of $\C.$
\end{example}

\subsection*{Unimodular networks}

We follow the notation and development of this material from \cite{AL}, which should be consulted for more details.
A network is a graph $(V,E)$ in which vertices and edges are marked by some elements of a complete separable metric space, that is to say there is a complete separable metric space $\Xi$ and maps $\psi_V: V \to \Xi$ and $\psi_E : E \to \Xi.$  A network is rooted if there is a designated vertex $o \in V,$ and birooted if there is an ordered pair $(x,y)$ of distinguished vertices.  Let $\mathcal{G}_*$ be the space of isomorphism classes of rooted networks, and let $\mathcal{G}_{**}$ be the isomorphism classes of birooted networks.  A random rooted network is \emph{unimodular} if it obeys the \emph{mass-transport principle}, which is to say for all Borel $f : \mathcal{G}_{**} \to [0,\infty],$
\begin{equation}
  \Exp\left[ 
    \sum_{x \in V} f(G,o,x,\psi)
  \right]
  =
  \Exp\left[ 
    \sum_{x \in V} f(G,x,o,\psi)
  \right].
  \label{eq:mtp}
\end{equation}

The Voronoi tessellation on $\mathbb{M}$ with nuclei $\mathcal{P}_o$ naturally gives rise to a unimodular network.  
The graph $G$ is the \emph{Delaunay graph} on vertices $\mathcal{P}_o.$  
Two vertices $x$ and $y$ are connected by an edge if and only if $\vso(x) \cap \vso(y)$ has codimension-1.  Equivalently, two vertices $x$ and $y$ are connected if and only if there is a ball $\MB(z,r)$ having the property that $x,y \in \partial \MB(z,r)$ and $\MB(z,r) \cap \mathcal{P}_o$ is empty.  
Under some genericity conditions, which for example are satsified by Poisson points with probability $1,$ this graph is a $1$--skeleton of a triangulation of $\mathbb{M}.$

The graph will be rooted at the vertex at $o.$
The space of marks $\Xi$ will be taken to be the manifold $\mathbb{M}.$  
The vertices will be marked by the locations of the points, and the edge marks will be the midpoints of the $\mathbb{M}$--geodesic between its endpoints.  We let $\psi$ denote this marking.  We refer to this construction of random network as the \emph{embedded Delaunay graph} with nuclei $\mathcal{P}_o.$

\begin{theorem}
  Let $\mathcal{P}_o$ be the palm process of a stationary point process, and let $(G,o,\psi)$ be the embedded Delaunay graph with these nuclei.  Then $(G,o,\psi)$ is a unimodular random network. 
  \label{thm:network}
\end{theorem}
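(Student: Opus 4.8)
\textbf{Reduction to a diagonally invariant mass transport over $\mathcal{P}_o$.} The plan is to derive the mass-transport principle \eqref{eq:mtp} from Palm calculus for $\mathcal{P}$, and the first step is to record that the embedded Delaunay graph is a deterministic, isometry-equivariant functional of the point configuration. The rule that assigns to a closed discrete set $S\subset\mathbb{M}$ the Delaunay graph on $S$, with vertices marked by the points of $S$ and edges marked by the midpoints of the connecting $\mathbb{M}$-geodesics, commutes with every isometry $\tau$ of $\mathbb{M}$ and (under the a.s.\ genericity of $\mathcal{P}_o$, which also makes the graph locally finite and the midpoints well defined) is Borel as a map into $\mathcal{G}_*$. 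Hence every Borel $f:\mathcal{G}_{**}\to[0,\infty]$ can be written $f(G,o,x,\psi)=h(o,x,\mathcal{P}_o)$ for $x\in V=\mathcal{P}_o$, where $h:\mathbb{M}\times\mathbb{M}\times\mathcal{M}_{\mathbb{M}}\to[0,\infty]$ is Borel (defined for $(u,v,\xi)$ with $u,v\in\operatorname{supp}\xi$) and \emph{diagonally invariant}: $h(\tau u,\tau v,\tau\xi)=h(u,v,\xi)$ for every isometry $\tau$. So \eqref{eq:mtp} is equivalent to
\begin{equation*}
  \Exp\!\left[\sum_{x\in\mathcal{P}_o}h(o,x,\mathcal{P}_o)\right]=\Exp\!\left[\sum_{x\in\mathcal{P}_o}h(x,o,\mathcal{P}_o)\right]
  \tag{$\star$}
\end{equation*}
for all such $h$.

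\textbf{Ingredients.} Three facts enter. (i) The Palm relation, reformulated as the refined Campbell formula $\Exp[\sum_{x\in\mathcal{P}}F(x,\mathcal{P})]=\lambda\int_{\mathbb{M}}\Exp[F(x,\mathcal{P}_x)]\,dV(x)$ for nonnegative Borel $F$, where $\{\mathcal{P}_x\}_{x\in\mathbb{M}}$ is a version of the Palm family and, by stationarity, $\mathcal{P}_x\lawequals\tau\mathcal{P}_o$ for any isometry $\tau$ with $\tau o=x$. (ii) The $K$-invariance in law of $\mathcal{P}_o$, which holds because $\mathcal{P}$ is $G$-invariant and $K$ fixes $o$; in particular $\tau\mathcal{P}_o$ does not depend in law on the choice of $\tau$ with $\tau o=x$. (iii) The compatibility of Haar with volume: normalizing the Haar measure $\mu$ of $G$ so that $\mu(K)=1$, the pushforward of $\mu$ under the orbit map $g\mapsto g\,z$ equals $V$ for every fixed $z\in\mathbb{M}$, since it is a $G$-invariant Radon measure and the stabilizer of $z$ is conjugate to $K$, hence of $\mu$-mass one.

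\textbf{The two-way unfolding.} Fix a bounded Borel $B\subset\mathbb{M}$ with $0<V(B)<\infty$ and a measurable choice $y\mapsto\tau_y\in G$ with $\tau_y o=y$, and evaluate $Q:=\Exp[\sum_{x\in\mathcal{P}\cap B}\sum_{y\in\mathcal{P}}h(x,y,\mathcal{P})]\in[0,\infty]$ in two ways using (i). Unfolding the outer sum, via $F(x,\xi)=\one[x\in B]\sum_{y\in\xi}h(x,y,\xi)$, and relocating the root to $o$ with $\mathcal{P}_x\lawequals\tau\mathcal{P}_o$ and the diagonal invariance of $h$, the inner expectation becomes independent of $x$ and $Q=\lambda V(B)\,\Exp[\sum_{y\in\mathcal{P}_o}h(o,y,\mathcal{P}_o)]$. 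Unfolding the inner sum instead, via $\tilde F(y,\xi)=\sum_{x\in\xi}\one[x\in B]h(x,y,\xi)$ and $\mathcal{P}_y\lawequals\tau_y\mathcal{P}_o$, then inserting a Haar average over $\kappa\in K$ (legitimate by (ii)) and pushing it through the diagonal invariance of $h$, one gets $Q=\lambda\,\Exp[\sum_{x\in\mathcal{P}_o}h(x,o,\mathcal{P}_o)\,c(x)]$ with $c(x)=\int_{\mathbb{M}}\int_K\one[\tau_y\kappa x\in B]\,d\kappa\,dV(y)$. Writing $dV(y)$ as the pushforward of $\mu$, absorbing the residual $K$-ambiguity of $\tau_y$ into the $\kappa$-average, and applying (iii) gives $c(x)=V(B)$ for every $x$. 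Equating the two evaluations and dividing by the finite positive number $\lambda V(B)$ yields $(\star)$, and hence $(G,o,\psi)$ is unimodular.

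\textbf{Main obstacle.} This is Palm calculus in a standard spirit, so the difficulty is in getting the bookkeeping right rather than in a new idea; the delicate point is the evaluation $c(x)=V(B)$. Because $\mathbb{M}=G/K$ with $G$ noncompact and not acting simply transitively, one cannot identify $\mathbb{M}$ with the acting group, and the two substitutions $\mathcal{P}_x\lawequals\tau\mathcal{P}_o$, $\mathcal{P}_y\lawequals\tau_y\mathcal{P}_o$ bring in normalization factors that must cancel; this relies on the unimodularity of $\operatorname{Isom}(\mathbb{M})$ --- valid for every Riemannian symmetric space through its decomposition into Euclidean, compact, and semisimple factors --- together with the compactness of $K$. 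The remaining, more routine, obligation is the reduction of the first step: checking that the Delaunay-plus-midpoint construction is isometry-equivariant and Borel into $\mathcal{G}_*$, that a.s.\ genericity of $\mathcal{P}_o$ makes the network well defined, and hence that $f(G,o,x,\psi)$ is genuinely of the form $h(o,x,\mathcal{P}_o)$ for a diagonally invariant Borel $h$.
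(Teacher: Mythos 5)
Your argument is correct, but it takes a genuinely different route from the paper. The paper first invokes \cite[Proposition 2.2]{AL} to reduce to functions supported on adjacent birooted pairs, passes to the \emph{two-point} Palm measures $\mathcal{P}_{o,x}$, and then obtains the swap of the two roots from the geodesic symmetry $\tau_{o,x}$ through the midpoint of $o$ and $x$ --- an isometry interchanging the two points, whose existence is exactly what the symmetric-space hypothesis provides, combined with the invariance of $\mathcal{P}_{o,x}$ under isometries fixing $\{o,x\}$. You instead prove the full mass-transport identity $(\star)$ directly by the classical two-way unfolding of a double Campbell sum, with the burden shifted to the normalization $c(x)=V(B)$, which you correctly trace to the unimodularity of $\operatorname{Isom}(\mathbb{M})$ and the compactness of $K$. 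The two proofs rest on the same geometric fact in different guises: the point-swap involutions force the modular function to be trivial (each $\sigma_p$ is an involution, and transvections act transitively), so the paper's device is in effect a pointwise, symmetric-space-specific substitute for the unimodularity you invoke globally. What your route buys is generality --- it works verbatim for any homogeneous Riemannian manifold with unimodular isometry group and compact stabilizers, with no symmetric-space structure and no second-order Palm calculus --- at the cost of the Weil disintegration bookkeeping; the paper's route is shorter for symmetric spaces and makes the role of the geodesic symmetries transparent. One caveat applies equally to both arguments: your claim that $f(G,o,x,\psi)=h(o,x,\mathcal{P}_o)$ with $h$ \emph{diagonally invariant} implicitly treats the embedded marks only up to a simultaneous isometry (otherwise an isometry changes the marks and hence the isomorphism class in $\mathcal{G}_{**}$); this is the same convention the paper uses when it drops the reflection from $\tau_{o,x}\circ\psi$ in its final step, and when Theorem~\ref{thm:bias} re-centers $\psi$ upon re-rooting, so it is not a gap relative to the paper, but it is worth stating explicitly.
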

\noindent The proof is given in Section~\ref{sec:dl}

This leads us to the following generalization of lattices.
\begin{definition}
  A \emph{distributional lattice} in a Riemannian symmetric space $\mathbb{M}$ is the palm process $\mathcal{P}_o$ of a stationary point process with the property that $\Exp[\Vol[\mathbb{M}](\vso(o))] < \infty$ and $\Exp[ \Deg_G(o)] <\infty.$
\end{definition}
When a Riemannian symmetric space has a lattice $\mathcal{L},$ it can be made into a distributional lattice by picking a Haar distributed coset.  Also, palm processes of stationary Poisson processes with positive finite intensity are always distributional lattices (see Theorem~\ref{thm:pv}).

\subsection*{Amenability}

A homogeneous Riemannian manifold $G/H$ is amenable if for every compact subset $S \subset G$ there is a sequence of measurable sets $V_n$ with finite volume so that
\[
  \lim_{n \to \infty} \max_{g \in S} \frac{|g V_n \Delta V_n|}{|V_n|} = 0,
\]
with $|\cdot|$ denoting the volume measure.
A locally compact topological group $G$ is amenable if for every compact subset $S \subset G,$ there is a sequence of measurable sets $V_n$ with finite positive Haar measure so that
\[
  \lim_{n \to \infty} \max_{g \in S} \frac{\mu( g V_n \Delta V_n)}{\mu(V_n)} \to 0.
\]
If $G$ is amenable, then as $H$ is necessarily closed (as $G/H$ is a manifold), the amenability of $H$ follows.  On the other hand, if $H$ is amenable (which is always the case for a Riemannian symmetric space, in which case we can take it to be compact), the amenability of $G/H$ implies the amenability of $G.$  In particular, from this we can conclude that all Riemannian symmetric spaces $\mathbb{M}$ of noncompact type are nonamenable, and hence, any Riemannian symmetric space with a de Rham factor of noncompact type is nonamenable as well.

A graph $G$ is (edge) \emph{non}-amenable if 
\begin{equation}
  i^E(G) := \inf_{\substack{V \subset G \\ |V| <\infty}} \frac{|\partial_E V|}{\Vol(V)} > 0.
  \label{eq:ieG}
\end{equation}
where $\partial_E V$ is the set of edges crossing from $V$ to its complement and $\Vol(V_n)$ is the sum of degrees of vertices in $V.$  In a Riemannian symmetric space $G/K$ having lattices, the amenability of the space is equivalent to the existence of a lattice $\mathcal{L} < G$ for which the Delaunay graph with nuclei $\mathcal{L}\cdot o$ is amenable.  In particular the following theorem is classical (see \cite{Gelander} for a discussion).
\begin{theorem}
  In a Riemannian symmetric space $\mathbb{M}$ with distinguished point $o$ for which lattices exist, the following are equivalent.
  \begin{enumerate}
    \item $\mathbb{M}$ is amenable.
    \item For every lattice $\mathcal{L}$ in the isometry group of $\mathbb{M},$ the Delaunay graph with nuclei $\mathcal{L} \cdot o$ is amenable.
    \item There exists a lattice $\mathcal{L}$ in the isometry group of $\mathbb{M}$ so that the Delaunay graph with nuclei $\mathcal{L} \cdot o$ is amenable.
  \end{enumerate}
  \label{thm:la}
\end{theorem}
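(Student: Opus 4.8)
The plan is to route everything through the amenability of the lattice as an abstract group, which will serve as the hub of the equivalence. Write $G$ for the isometry group of $\mathbb{M}$ and recall from the discussion above that, because $K$ is compact, $\mathbb{M}=G/K$ is amenable if and only if $G$ is amenable: F{\o}lner sets in $G$ project to F{\o}lner sets in $\mathbb{M}$, and conversely F{\o}lner sets in the homogeneous space lift back to $G$ since $K$ is amenable.

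The first ingredient I would set up is the purely group-theoretic equivalence, valid for any lattice $\mathcal{L}<G$, that $G$ is amenable if and only if the discrete group $\mathcal{L}$ is amenable. The forward implication is the standard fact that amenability passes to closed subgroups, and $\mathcal{L}$ is closed because it is discrete. For the converse I would invoke co-amenability of a lattice: by definition the $G$-space $G/\mathcal{L}$ carries a $G$-invariant probability measure, so given any continuous action of $G$ on a compact space one first produces an $\mathcal{L}$-invariant probability measure (using amenability of $\mathcal{L}$) and then averages its $G$-translates against the invariant probability measure on $G/\mathcal{L}$ to obtain a $G$-invariant one. Together with the previous paragraph, this shows that condition (1) is equivalent to the amenability of $\mathcal{L}$ for any fixed lattice $\mathcal{L}$.

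The second ingredient relates the group $\mathcal{L}$ to the combinatorial Delaunay graph $\mathcal{D}$ on $\mathcal{L}\cdot o$. The key lemma is that $\mathcal{D}$ is quasi-isometric to $\mathcal{L}$ with the word metric of a finite generating set. Its inputs are: (a) $\mathcal{D}$ is connected, since the cells of a Voronoi tessellation of the connected manifold $\mathbb{M}$ are linked across their codimension-one faces; (b) $\mathcal{D}$ has uniformly bounded vertex degree, because the Voronoi cell $\vso(o)$ is a Dirichlet fundamental region for $\mathcal{L}$ with only finitely many facets, so $o$ has finitely many Delaunay neighbours, and $\mathcal{L}$ carries every other cell isometrically onto $\vso(o)$, propagating the bound to every vertex; (c) $\mathcal{L}$ acts on $\mathcal{D}$ by graph automorphisms, transitively on $\mathcal{L}\cdot o$ and with finite vertex stabilizers (conjugates of $\mathcal{L}\cap K$), hence properly discontinuously and cocompactly. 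The Milnor--Svarc lemma then gives simultaneously that $\mathcal{L}$ is finitely generated (in any case classical for lattices in semisimple Lie groups) and that $\mathcal{D}$ is quasi-isometric to $\mathcal{L}$. Since $\mathcal{D}$ has bounded degree, the vanishing of the isoperimetric constant $i^E(\mathcal{D})$ of \eqref{eq:ieG} coincides with the vanishing of its vertex Cheeger constant, a quasi-isometry invariant among bounded-degree graphs; and amenability of the finitely generated group $\mathcal{L}$ is, by F{\o}lner's criterion, the vanishing of the isoperimetric constant of any Cayley graph of $\mathcal{L}$. Hence $\mathcal{D}$ is amenable exactly when $\mathcal{L}$ is an amenable group.

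With both ingredients in hand, the cycle closes: $(1)\Rightarrow(2)$ because $\mathbb{M}$ amenable forces $G$, hence every lattice $\mathcal{L}$, hence (by the lemma) every Delaunay graph on $\mathcal{L}\cdot o$ to be amenable; $(2)\Rightarrow(3)$ because lattices exist by hypothesis; $(3)\Rightarrow(1)$ because an amenable Delaunay graph forces the corresponding $\mathcal{L}$, hence $G$, hence $\mathbb{M}$, to be amenable. The step I expect to be the real obstacle is point (b) in the non-uniform case: there $\vso(o)$ is non-compact, and showing that its Dirichlet domain still has only finitely many facets (so that $\mathcal{D}$ genuinely has bounded degree) requires reduction theory and control of the cusps; for uniform lattices $\vso(o)$ is a compact polytope and the point is elementary. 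The rest is standard functional analysis and geometric group theory, which is why the statement is classical; I would refer to \cite{Gelander} for the details.
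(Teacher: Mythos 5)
Your route is genuinely different from what the paper does. The paper treats this statement as classical, pointing to \cite{Gelander}, and the argument it actually writes out in Section~\ref{sec:dl} under this theorem's label is really the proof of its generalization, Theorem~\ref{thm:lat}: amenability of $\mathbb{M}$ is converted into \emph{invariant} amenability of the Delaunay graph by coarsening with a low--intensity auxiliary Poisson process, and conversely F\o lner sets in $\mathbb{M}$ are assembled as unions of Voronoi cells over the clusters of a finitary unimodular percolation, with the bookkeeping done via the mass--transport principle (Lemma~\ref{lem:unihelper}). You instead run the classical geometric--group--theory chain: $\mathbb{M}$ amenable iff $G$ amenable (compactness of $K$), iff the lattice $\mathcal{L}$ is amenable (closed subgroups plus co--amenability of lattices), iff a Cayley graph of $\mathcal{L}$ is amenable (F\o lner), iff the Delaunay graph is amenable (Milnor--Svarc and quasi--isometry invariance). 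For the deterministic statement this is cleaner and yields more (a quasi--isometry), but it has no analogue for random nuclei, which is precisely why the paper works with invariant amenability instead of $i^E$ and quasi--isometries.

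The step you flag as the obstacle --- bounded degree of the Delaunay graph for non--uniform lattices, equivalently finite--sidedness of the Dirichlet--Voronoi cell $\vso(o)$ --- is a genuine gap, not a footnote. Every downstream step of your argument depends on it: local finiteness is needed even to apply Milnor--Svarc, quasi--isometry invariance of amenability fails without bounded degree, and the definition \eqref{eq:ieG} degenerates if $\deg(o)=\infty$. Moreover, reduction theory supplies finite--sided \emph{coarse} fundamental sets, which does not by itself show that the specific Dirichlet cell has finitely many facets; that finiteness is a delicate question in higher rank and is not something you can cite as routine. The paper's framework is engineered to sidestep exactly this point: a distributional lattice only requires $\Exp[\deg(o)]<\infty$ and finite expected cell volume, and invariant amenability is insensitive to unbounded degrees. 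So either supply the finite--sidedness argument (easy in the uniform case, where $\vso(o)$ is a compact polytope), or restrict to uniform lattices, or fall back on the paper's probabilistic proof of Theorem~\ref{thm:lat} specialized to the Haar--randomized lattice orbit, together with the equivalence of amenability and invariant amenability for unimodular transitive graphs.
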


Many random graphs, such as Galton Watson trees with offspring distribution having positive probability of fewer than $2$ children and Poisson--Delaunay graphs are amenable due to random fluctuations, regardless of the amenability of the underlying space $\mathbb{M}.$ For this reason, one needs a weaker notion of nonamenability (respectively a stronger notion of amenability) to characterize how nonamenability appears in these random graphs.

One such notion is \emph{anchored amenability}.  A rooted graph $(G,o)$ is anchored amenable if there is a sequence of finite sets $V_n$ so that $o \in V_n,$ so that the induced subgraph on $V_n$ is connected, and so that
\[
  \lim_{n \to \infty} \frac{|\partial_E{V_n}|}{\Vol(V_n)} = 0.
\]
It is shown in \cite{BPP} that the Poisson--Delaunay graph in $\Htwo^2$ is anchored nonamenable.  Forthcoming work in \cite{BKP} show that a slightly weaker version of anchored nonamenability holds in $\Htwo^d.$

Another, still weaker notion is \emph{invariant amenability}, introduced in \cite{AL} and used to great effect in the study of unimodular random triangulations in \cite{AHNR}.  For a unimodular random network $(G,o,\psi)$ with marks $\Xi$, a \emph{unimodular percolation} is another unimodular random network $(G,o,\psi \times \omega)$ with an augmented mark space $\Xi \times \left\{ 0,1 \right\}$ which is again unimodular.  One may consider this as a random subnetwork of the original space by taking only those vertices $v$ and edges $e$ so that $\omega(v)=\omega(e)=1,$ and we assume that if an edge is present in the network, then so are both its endpoints.  We let $K_\omega(v)$ denote the connected component of $v$ in this subnetwork.  If $\omega(v) = 0,$ we take $K_\omega(v) = \{v\}$ by convention.

A percolation is \emph{finitary} if all of its clusters are finite almost surely.  A unimodular network is \emph{invariantly amenable} if
\[
  \inf \left\{ \Exp \frac{|\partial_E K_\omega(v)|}{|K_\omega(v)|} : \text{$\omega$ a finitary unimodular percolation}  \right\}  = 0.
\]
This notion allows us to give a precise generalization of Theorem~\ref{thm:la} for stationary point processes. 
\begin{theorem}
  In a Riemannian symmetric space $\mathbb{M}$ with distinguished point $o$, the following are equivalent.
  \begin{enumerate}
    \item $\mathbb{M}$ is amenable.
    \item For every
      distributional lattice $\mathcal{P}_o$
      the embedded Delaunay graph with nuclei $\mathcal{P}_o$ is invariantly amenable.
    \item There exists 
      a distributional lattice $\mathcal{P}_o$
      so that the embedded Delaunay graph with nuclei $\mathcal{P}_o$ is invariantly amenable.
  \end{enumerate}
  \label{thm:lat}
\end{theorem}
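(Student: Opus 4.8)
The plan is to establish the cyclic chain $(1)\Rightarrow(2)\Rightarrow(3)\Rightarrow(1)$. The implication $(2)\Rightarrow(3)$ is immediate: by Theorem~\ref{thm:pv} the Palm process of a stationary Poisson process of any positive finite intensity is a distributional lattice, and such processes exist on every Riemannian homogeneous space, so the class of distributional lattices is nonempty. For the other two implications I first reduce to one scalar. Given a finitary unimodular percolation $\omega$ of the embedded Delaunay graph $(G,o,\psi)$, let $D_o(\omega)$ be the number of Delaunay edges incident to $o$ whose other endpoint lies outside the cluster $K_\omega(o)$. Applying the mass-transport principle \eqref{eq:mtp} in the enriched network $(G,o,\psi\times\omega)$ to the transport rule $f(G,x,y,\cdot)=|K_\omega(x)|^{-1}\,\one[y\in K_\omega(x)]\,g(G,y)$, which sends total mass $g(G,x)$ out of each $x$, gives the weighted identity $\Exp\bigl[|K_\omega(o)|^{-1}\sum_{y\in K_\omega(o)}g(G,y)\bigr]=\Exp[g(G,o)]$ for every Borel $g\ge 0$. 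Taking $g(G,v)$ to be the number of Delaunay edges at $v$ leaving $K_\omega(v)$ and using $|\partial_E K_\omega(o)|=\sum_{v\in K_\omega(o)}g(G,v)$, one obtains
\[
  \Exp\!\left[\frac{|\partial_E K_\omega(o)|}{|K_\omega(o)|}\right]=\Exp[D_o(\omega)].
\]
Here I take $|K_\omega(o)|$ to be the number of vertices; since $\Exp[\Deg_G(o)]<\infty$ the sum-of-degrees normalization gives an equivalent notion. Thus the embedded Delaunay graph is invariantly amenable iff $\inf_\omega\Exp[D_o(\omega)]=0$, and it suffices to control this infimum in each direction.

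For $(1)\Rightarrow(2)$, let $\mathcal{P}_o$ be any distributional lattice. Since $\mathbb{M}$ is amenable it has no de Rham factor of noncompact type, so up to a discrete quotient it is a product of a Euclidean space and a compact symmetric space; in particular it has polynomial volume growth, $\Vol[\mathbb{M}](\MB(o,t))\asymp t^{k}$ with geodesic sphere area $\asymp t^{k-1}$ for $t\ge 1$. Let $\mathcal{Q}$ be an independent stationary Poisson process on $\mathbb{M}$ of intensity $\delta>0$, and define $\omega$ by keeping every vertex and keeping an edge $xy$ precisely when $x$ and $y$ lie in the same Voronoi cell of $\mathcal{Q}$. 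By the same reasoning as Theorem~\ref{thm:network}, applied to $\mathcal{P}$ decorated with the auxiliary field $\mathcal{Q}$, this $\omega$ is a unimodular percolation; its clusters are contained in the a.s.\ finite set of $\mathcal{P}_o$-points lying in a single Voronoi cell of $\mathcal{Q}$, so $\omega$ is finitary. To bound $\Exp[D_o(\omega)]$, fix $T$ and split the neighbors $y\sim o$ according to whether $\dM(o,y)\le T$. The far neighbors contribute at most $\Exp[\#\{y\sim o:\dM(o,y)>T\}]$, which tends to $0$ as $T\to\infty$ by $\Exp[\Deg_G(o)]<\infty$ and dominated convergence. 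A near neighbor contributes only if it lies in a different $\mathcal{Q}$-cell than $o$; by homogeneity this probability depends only on $r=\dM(o,y)$, and it is at most the probability that some point of $\mathcal{Q}$ other than the one nearest $o$ lies within distance $\rho+2r$ of $o$, where $\rho=\dM(o,\mathcal{Q})$, since otherwise the nucleus nearest $o$ is also the nucleus nearest $y$. A short computation using the polynomial growth (the annulus $\MB(o,\rho+2r)\setminus\MB(o,\rho)$ has volume $\lesssim r\rho^{k-1}$, typically $\rho\asymp\delta^{-1/k}$, and $\Pr(\rho>K\delta^{-1/k})\le e^{-cK^{k}}$) bounds this by a quantity that is $o_\delta(1)$ uniformly over $r\le T$. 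Hence $\Exp[D_o(\omega)]\le\Exp[\#\{y\sim o:\dM(o,y)>T\}]+\Exp[\Deg_G(o)]\cdot o_\delta(1)$, and choosing first $T$ large and then $\delta$ small shows $\inf_\omega\Exp[D_o(\omega)]=0$.

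For $(3)\Rightarrow(1)$ I argue the contrapositive. If $\mathbb{M}$ is nonamenable then so is its isometry group, and for a homogeneous manifold this is equivalent to a positive Cheeger isoperimetric constant $h_{\mathbb{M}}=\inf_U\operatorname{Area}(\partial U)/\Vol[\mathbb{M}](U)>0$. Let $\mathcal{P}_o$ be any distributional lattice of intensity $\lambda$, and $\omega$ any finitary unimodular percolation. Thicken the cluster to $\widehat{K}_\omega(o)=\bigcup_{x\in K_\omega(o)}\vso(x)$; distinct Voronoi cells are disjoint and, as $\Exp[\Vol[\mathbb{M}](\vso(o))]<\infty$, each cell has a.s.\ finite volume, so $\widehat{K}_\omega(o)$ has finite volume $\sum_{x\in K_\omega(o)}\Vol[\mathbb{M}](\vso(x))$ and the weighted identity gives $\Exp[\Vol[\mathbb{M}](\widehat{K}_\omega(o))/|K_\omega(o)|]=\Exp[\Vol[\mathbb{M}](\vso(o))]=1/\lambda$. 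Up to a null set, $\partial\widehat{K}_\omega(o)$ is the union of the walls $\vso(x)\cap\vso(y)$ with $x\in K_\omega(o)\not\ni y$; writing $b_v(\omega)$ for the total area of the walls of $\vso(v)$ leaving $K_\omega(v)$, the isoperimetric inequality yields $\sum_{x\in K_\omega(o)}b_x(\omega)=\operatorname{Area}(\partial\widehat{K}_\omega(o))\ge h_{\mathbb{M}}\Vol[\mathbb{M}](\widehat{K}_\omega(o))$, and dividing by $|K_\omega(o)|$ and using the weighted identity once more gives $\Exp[b_o(\omega)]\ge h_{\mathbb{M}}/\lambda$. It remains to turn this lower bound on the expected total area of exiting walls at $o$ into a lower bound on $\Exp[D_o(\omega)]$, the expected number of such walls. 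Truncating at a threshold $A$ one has $b_o(\omega)\le A\,D_o(\omega)+\Sigma_A(o)$, where $\Sigma_A(o)$, which does not depend on $\omega$, is the total area of those individual walls of $\vso(o)$ that exceed $A$; hence $\Exp[D_o(\omega)]\ge(h_{\mathbb{M}}/\lambda-\Exp[\Sigma_A(o)])/A$. One then argues $\Exp[\Sigma_A(o)]\to 0$ as $A\to\infty$ by bounding the wall area of the bounded convex polytope $\vso(o)$ in terms of $\Deg_G(o)$ and its circumradius and controlling the joint tail via the two finiteness hypotheses; taking $A$ large then gives $\inf_\omega\Exp[D_o(\omega)]\ge h_{\mathbb{M}}/(2\lambda A)>0$.

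The main obstacle is precisely this last conversion step in $(3)\Rightarrow(1)$: cells of a general distributional lattice can be geometrically wild — a thin, long cell has small volume but faces of enormous area — so passing from control of exiting-wall area to control of exiting-wall number is delicate and genuinely uses both finiteness hypotheses, and (if one does not wish to take it for granted) a preliminary argument that the cells are a.s.\ bounded with a.s.\ finite surface area. An alternative that sidesteps all of this geometry is to identify invariant amenability of the unimodular embedded Delaunay graph with amenability of the measured groupoid generated by the action of the isometry group on the configuration space of $\mathcal{P}$ — the finite intensity together with $\Exp[\Deg_G(o)]<\infty$ and $\Exp[\Vol[\mathbb{M}](\vso(o))]<\infty$ making this a groupoid of finite type, groupoid-equivalent to the Delaunay graphing — and then to invoke Zimmer's theorem that a probability-measure-preserving amenable action of a locally compact group forces the group to be amenable; this yields $(3)\Rightarrow(1)$ at the cost of importing measured-groupoid machinery. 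As a consistency check, when $\mathbb{M}$ admits lattices, the combination of Theorem~\ref{thm:la} with the elementary fact that a deterministic quasi-transitive nonamenable graph is invariantly nonamenable already settles the distributional lattices that come from genuine lattices.
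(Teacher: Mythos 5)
Your overall architecture $(1)\Rightarrow(2)\Rightarrow(3)\Rightarrow(1)$, the reduction of invariant amenability to $\inf_\omega\Exp[D_o(\omega)]$ via the weighted mass-transport identity (this is exactly the paper's Lemma~\ref{lem:unihelper}), and your proof of $(1)\Rightarrow(2)$ by percolating on "same Voronoi cell of an independent low-intensity Poisson process $\mathcal{Q}$" coincide with the paper's argument. If anything you are more explicit than the paper about where amenability enters $(1)\Rightarrow(2)$: it is the subexponential volume growth that makes the relevant annulus/gap probabilities degenerate as $\delta\to 0$ (in exponential growth the gap $d_2-d_1$ between the two nearest $\mathcal{Q}$-points converges to an exponential law and does \emph{not} diverge), and this point deserves to be flagged.

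The problem is $(3)\Rightarrow(1)$, where you take a genuinely different route (Cheeger constant of $\mathbb{M}$ applied to the thickened cluster, then conversion of exiting-wall \emph{area} into exiting-wall \emph{number}), and the conversion step has a real gap that you name but do not close. Your bound $\Exp[D_o(\omega)]\ge (h_{\mathbb{M}}/\lambda-\Exp[\Sigma_A(o)])/A$ is vacuous unless $\Exp[\Sigma_A(o)]<\infty$ and tends to $0$, i.e.\ unless the surface area of $\partial\vso(o)$ is integrable. Neither hypothesis of a distributional lattice delivers this: $\Exp[\VolM(\vso(o))]<\infty$ and $\Exp[\Deg_G(o)]<\infty$ are compatible with a cell having few faces, small volume, and faces of enormous (even infinite) area. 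Your proposed repair -- "bound the wall area of the bounded convex polytope $\vso(o)$ in terms of $\Deg_G(o)$ and its circumradius" -- fails on several counts: the paper itself points out, inside its own proof of this implication, that cells of a distributional lattice can be \emph{unbounded} (e.g.\ for a nonuniform lattice such as $\mathrm{SL}_2(\Z)<\mathrm{SL}_2(\R)$), so there is no circumradius; bisectors in a general symmetric space are not totally geodesic, so cells are not polytopes; and even for bounded convex cells a surface-area bound would require a moment of the circumradius that is not implied by finite expected volume and degree. The paper circumvents all of this by never touching surface area: it builds candidate F{\o}lner sets $\mathscr{V}=\bigcup_{v\in K_\omega(o)}\vso(v)$ directly and bounds $\VolM(s\mathscr{V}\triangle\mathscr{V})/\VolM(\mathscr{V})$ for $s$ in a compact set $S$ using only volume and counting: truncating each cell to $\MB(v,R)$ (finite expected volume makes the truncation error small), bounding the number of cells meeting a bounded region (finite intensity), and discarding the "bad" cells within a bounded neighborhood of $\partial_E K_\omega(o)$, whose number is $O(T\,|\partial_E K_\omega(o)|)$ and hence a small fraction of $|K_\omega(o)|$ by invariant amenability. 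If you want to keep your isoperimetric approach you must either add a surface-area moment hypothesis or prove one for distributional lattices (which is false in general); otherwise you should switch to the F{\o}lner-set construction. Your measured-groupoid/Zimmer alternative is plausible in outline but is only a sketch, not a proof.
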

In particular, if $\mathbb{M}$ is nonamenable, every palm process $\mathcal{P}_o$ of a stationary point process with finite volume Voronoi cells produces an invariantly nonamenable Delaunay graph.

\subsection*{Ergodic connections and applications}

There are some immediate consequences that one can draw from the invariant nonamenability of a distributional lattice.  The first conclusion we draw uses the following theorem. 
\begin{theorem}[Theorem 3.2 of \cite{AHNR}]
  Let $(G,o)$ be an invariantly nonamenable unimodular random rooted graph with $\Exp\left[ \deg(o)< \infty \right].$  Then $G$ admits a unimodular percolation $\omega$ so that the open subgraph $G\vert_\omega$ has $i^E(G\vert_\omega) > 0$ (c.f.\,\eqref{eq:ieG}) almost surely and so that every vertex in $G\vert_\omega$ has uniformly bounded degrees in $G.$
  \label{thm:bdedexpander}
\end{theorem}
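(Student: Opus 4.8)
\noindent The plan is to reduce to the case of uniformly bounded degree, where the structure theory of invariant amenability applies cleanly. For $D \in \N$ write $L_D := \{v \in V(G) : \Deg_G(v) \le D\}$. Since $\Exp[\Deg_G(o)] < \infty$ we have $\Pr(o \in L_D) \to 1$, so $\Pr(o \in L_D) > 0$ once $D$ is large; for such $D$ the induced subnetwork of $G$ on $L_D$, rooted at $o$ under the conditional law $\Pr(\,\cdot \mid o \in L_D)$, is again a unimodular random network --- call it $(G_D, o_D)$ --- now of degree at most $D$, unimodularity being a routine check against \eqref{eq:mtp} (a mass transport on $G_D$ only ever sees vertices of $L_D$). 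The crux of the first part is to show that $(G_D, o_D)$ is invariantly nonamenable for every sufficiently large $D$. Granting this, $(G_D, o_D)$ is a bounded-degree invariantly nonamenable unimodular network, so by the theory of invariant amenability in \cite{AL} it admits a unimodular percolation $\omega$ whose clusters almost surely have $i^E > 0$; since $G_D$ has degree at most $D$, every vertex retained by $\omega$ has degree at most $D$ in $G_D$, hence at most $D$ in $G$. Reading $\omega$ back inside $G$ --- it keeps only vertices of $L_D$, together with some of the $G$-edges among them --- yields the asserted unimodular percolation of $(G,o)$: $G\vert_\omega$ has $i^E > 0$ almost surely, and each of its vertices has degree at most $D$ in $G$.

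The crux is established by contradiction: suppose $(G_D, o_D)$ is invariantly amenable for arbitrarily large $D$, witnessed by finitary unimodular percolations $\eta = \eta_D$ of $G_D$ with $\Exp\bigl[\,|\partial_E^{G_D} K_{\eta}(o_D)| / |K_{\eta}(o_D)|\,\bigr]$ arbitrarily small. Lift $\eta$ to the percolation $\tilde\eta$ of $G$ that agrees with $\eta$ on $L_D$ and puts every vertex of $V \setminus L_D$ into its own singleton cluster; one checks that $\tilde\eta$ is again a finitary unimodular percolation of $G$. With $K := K_{\tilde\eta}(o)$, the $G$-boundary of a cluster contained in $L_D$ is its $G_D$-boundary together with the edges it sends out of $L_D$, while a singleton $\{o\}$ with $o \notin L_D$ has $G$-boundary $\Deg_G(o)$, so
\[
  \frac{|\partial_E^{G} K|}{|K|}
  \;\le\;
  \frac{|\partial_E^{G_D} K|}{|K|}\,\mathbf 1_{\{o \in L_D\}}
  \;+\; \frac{1}{|K|}\sum_{v \in K} e\bigl(\{v\},\, V \setminus L_D\bigr)
  \;+\; \Deg_G(o)\,\mathbf 1_{\{o \notin L_D\}}.
\]
Taking expectations: the first term is as small as desired; the third equals $\Exp\bigl[\Deg_G(o)\,\mathbf 1_{\{\Deg_G(o) > D\}}\bigr] \to 0$ by integrability of $\Deg_G(o)$; and by the mass-transport cluster-averaging identity (an instance of \eqref{eq:mtp}) the middle term has expectation $\Exp\bigl[e(\{o\},\, V \setminus L_D)\bigr] = \Exp\bigl[\#\{w \sim o : \Deg_G(w) > D\}\bigr]$, which one more application of \eqref{eq:mtp} identifies with $\Exp\bigl[\Deg_G(o)\,\mathbf 1_{\{\Deg_G(o) > D\}}\bigr]$ and so likewise tends to $0$. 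Hence $G$ would admit finitary unimodular percolations of arbitrarily small expected isoperimetric ratio, contradicting its invariant nonamenability; thus some large $D$ works.

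The main obstacle is the bounded-degree input invoked in the first paragraph: producing a single percolation with $i^E > 0$ out of a sequence of ever-better finitary percolations is the genuinely analytic step, carried out in \cite{AL} by a compactness argument in the space of unimodular networks (exhaust the network by finitary percolations whose amenable part is asymptotically negligible and pass to a weak limit). The remaining ingredients --- the conditioning defining $(G_D, o_D)$, the lift $\eta \mapsto \tilde\eta$, and the fact that these operations preserve unimodularity --- are routine unimodular-network bookkeeping; the one point to verify is that the extra boundary created by deleting the high-degree vertices is controlled uniformly in the competing percolations $\eta$, which the cluster-averaging identity does. An essentially equivalent route would first extract a (possibly unbounded-degree) percolation of $G$ with $i^E > 0$ and only afterwards prune its high-degree vertices, re-establishing positivity of $i^E$ by the same stability estimate; the order chosen here has the mild advantage of confining the hard step to the bounded-degree setting.
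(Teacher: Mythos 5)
The paper does not actually prove this statement: it is imported verbatim as Theorem 3.2 of \cite{AHNR}, with only the remark that the ergodicity hypothesis there can be dropped by ergodic decomposition. So the relevant comparison is with the source's argument. Your opening reduction is correct and is in fact the same first move as in \cite{AHNR}: truncating to $L_D=\{v:\Deg_G(v)\le D\}$, noting that the induced subnetwork conditioned on $o\in L_D$ is unimodular, and showing by contradiction that invariant nonamenability survives truncation for large $D$. Your estimate for the lifted finitary percolation $\tilde\eta$,
\[
\Exp\left[\frac{|\partial_E^{G} K_{\tilde\eta}(o)|}{|K_{\tilde\eta}(o)|}\right]
\le
\Exp\left[\frac{|\partial_E^{G_D} K_{\eta}(o)|}{|K_{\eta}(o)|}\,\one[o\in L_D]\right]
+2\,\Exp\left[\Deg_G(o)\,\one[\Deg_G(o)>D]\right],
\]
obtained from the cluster-averaging identity (as in Lemma~\ref{lem:unihelper}) plus one further mass transport, is right, and both error terms vanish as $D\to\infty$ by integrability of the degree.

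The genuine gap is the bounded-degree case itself, which you dispatch with ``by the theory of invariant amenability in \cite{AL}.'' What \cite{AL} supplies is the equivalence of invariant amenability with hyperfiniteness; it does not contain the assertion that a bounded-degree invariantly nonamenable unimodular graph admits a percolation whose clusters have uniformly positive Cheeger constant. That assertion is, modulo your truncation, exactly the theorem being proved, and it is the substance of \cite{AHNR}'s contribution. Your sketch of the hard step---exhaust by finitary percolations and ``pass to a weak limit''---does not work as stated: local weak limits give no lower bound on the isoperimetry of the limit's clusters, and the hypothesis is an infimum over \emph{all} finitary percolations, not a statement about some convergent sequence of them. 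The actual construction extracts a nonamenable core: fix a small $\epsilon>0$ and equivariantly, iteratively delete finite vertex sets whose relative edge boundary falls below $\epsilon$; one then shows, using the hyperfiniteness characterization from \cite{AL} together with the mass-transport principle and invariant nonamenability, that the deleted set cannot exhaust the graph, while everything that survives satisfies the $\epsilon$-isoperimetric inequality on every finite subset of every cluster by construction. Without supplying this step, your proof assumes its conclusion at the decisive point.
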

In the cited theorem, the additional assumption of ergodicity is stated, but by the ergodic decomposition this version follows immediately from that one.

Let $\left( X_j : j=0,1,2,\dots\right)$ be simple random walk on $(G,o),$ the Markov chain on the vertices of $G$ where $X_{j+1}$ is distributed uniformly over the neighbors of $X_j$ for all $j=0,1,\dots.$  We take $X_0 = o.$  
\begin{theorem}[Theorem 4.1 of \cite{AL} or Proposition 2.5 of \cite{BenjaminiCurien}]
 Define the probability measure $\mathbb{Q}$ absolutely continuous with respect to the law of $(G,o,\psi,\left( X_j \right)_1^\infty)$ with Radon-Nikodym derivative $\deg(o)/\Exp \deg(o).$  Fix a a representative set of isometries $G/K,$ and let $\psi_j$ be a translate of $\psi$ by that representative isometry of $\mathbb{M}$ so that $\psi_j(X_j) = \psi(o).$ Under $\mathbb{Q}$ the random, rooted embedded Delaunay graphs $((G,X_j,\psi_j))_1^\infty$ are stationary.  That is to say the shift map $T$ defined by
 \[
   T(((G,X_j,\psi_j))_1^\infty) 
   = 
   ((G,X_j,\psi_j))_2^\infty 
 \]
 is $\mathbb{Q}$--measure preserving 
  \label{thm:bias}
\end{theorem}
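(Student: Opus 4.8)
The plan is to recognise the claimed stationarity as the standard reversibility of the degree‑biased simple random walk on a unimodular random network, with the $\mathbb{M}$‑embedding carried along. Fix once and for all a Borel section of the projection $G\to\mathbb{M}\cong G/K$, normalised so that $o$ lies over the identity coset, and write $\tau_q$ for the resulting representative isometry taking $q\in\mathbb{M}$ to $o$ (so $\tau_o=\mathrm{id}$). Let $P$ be the Markov kernel on rooted embedded Delaunay networks rooted so that the root sits at $o$, which re‑roots at a uniformly chosen neighbour $y$ of the current root and then applies $\tau_{\psi(y)}$ to the whole configuration; since $(\tau_{\psi(y)}\circ\psi)(y)=o$, this is a well‑defined Borel kernel on that space. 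Because $\Exp[\deg(o)]<\infty$ (part of the standing hypothesis), $\mathbb{Q}$ is a probability measure with density $\deg(o)/\Exp[\deg(o)]$, and the sequence $((G,X_j,\psi_j))_1^\infty$ is — up to the choice of representatives, discussed below — the $P$‑chain started from $\mathbb{Q}$. So it suffices to prove that $P$ is reversible with respect to $\mathbb{Q}$; stationarity then follows from $\mathbb{Q}P=\mathbb{Q}$ together with the Markov property.

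Reversibility is the mass‑transport principle \eqref{eq:mtp} applied to the transport rule
\[
  f(G,x,y,\psi):=\one[x\sim y]\;h\bigl(G,\,x,\,\tau_{\psi(x)}\circ\psi\bigr)\;h'\bigl(G,\,y,\,\tau_{\psi(y)}\circ\psi\bigr),
\]
for bounded Borel $h,h'$ on $\mathcal{G}_*$; this is a genuine Borel function on $\mathcal{G}_{**}$ because adjacency is an isomorphism invariant and the $\mathbb{M}$‑valued marks are preserved exactly by network isomorphisms. Dividing \eqref{eq:mtp} by $\Exp[\deg(o)]$, the left‑hand side becomes $\Exp_{\mathbb{Q}}\bigl[h(G,o,\psi)\,(Ph')(G,o,\psi)\bigr]$ — evaluating the $x=o$ factor using $\tau_o=\mathrm{id}$ and recognising the sum over neighbours $y$ as $\deg(o)$ times $Ph'$ — while the right‑hand side becomes $\Exp_{\mathbb{Q}}\bigl[h'(G,o,\psi)\,(Ph)(G,o,\psi)\bigr]$. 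Equality of these for all $h,h'$ is precisely self‑adjointness of $P$ on $L^2(\mathbb{Q})$, whence $\mathbb{Q}P=\mathbb{Q}$, and therefore $T$ preserves the law of $((G,X_j,\psi_j))_1^\infty$.

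The step I expect to require the most care is the bookkeeping of the re‑centering, i.e.\ the phrase ``representative set of isometries''. One must check that the Borel section above exists — it does, since $G$ is a Polish group acting continuously and transitively on the Polish space $\mathbb{M}$, although no \emph{continuous} section need exist — that $P$ is genuinely a Borel Markov kernel on the space of root‑at‑$o$ embedded Delaunay networks, and that the sequence in the statement really is, in distribution, the $P$‑chain. For the last point the subtlety is that the re‑centering isometry is determined only modulo the stabilizer $K$: re‑centering ``afresh'' from $\psi$ at step $j$ and re‑centering incrementally as $P$ does produce the same re‑rooted network up to an element of $K$, and one uses the $K$‑invariance of the Palm law underlying $\mathbb{Q}$ — valid because $K$ fixes $o$ — to see that this discrepancy does not affect the law of the sequence. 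Beyond this the argument is routine, and neither ergodicity nor any finer property of the walk is needed for stationarity alone.
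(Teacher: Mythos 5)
The paper does not prove this statement at all --- it is quoted directly from Theorem 4.1 of [AL] (and Proposition 2.5 of [BenjaminiCurien]) --- so there is no in-paper argument to compare against; your proof is exactly the standard one from those sources, namely reversibility of the degree-biased law under re-rooting via the mass-transport principle \eqref{eq:mtp}, correctly adapted to carry the $\mathbb{M}$-embedding as vertex marks. The one genuinely delicate point, which you correctly isolate, is that re-centering ``afresh'' by $\tau_{\psi(X_{j+1})}$ and re-centering incrementally through the kernel $P$ agree only up to a configuration-dependent element of $K$, so the sequence in the statement is not literally your $P$-chain; the clean fix is to note that the root-swap invariance of the doubly-rooted biased measure already yields $\Exp_{\mathbb{Q}}[h(G,o,\psi)]=\Exp_{\mathbb{Q}}[h(G,X_2,\tau_{\psi(X_2)}\circ\psi)]$ with the fresh re-centering built into the test function, and to run the Markov-chain argument on the quotient of the state space by the $K$-action, where the two re-centerings coincide and the chain is genuinely Markov. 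With that bookkeeping made explicit your argument is complete and matches the cited proofs.
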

Note that by mutual absolute continuity of $\mathbb{Q}$ and the original law, events that hold almost surely under one measure hold almost surely under the other.

\begin{corollary}
  Let $\mathbb{M}$ be nonamenable.
  For any distributional lattice $\mathcal{P}_o \subset \mathbb{M}$ the simple random walk on the embedded Delaunay graph $(G,o,\psi)$ has positive speed in the sense that
  \[
    s=\lim_{n \to \infty} \frac{\dM(X_j,o)}{j}
  \]
  is positive almost surely.  If in addition $\mathbb{M}$ is simply connected and nonpositively curved, then for the Poisson--Delaunay graph
  \[
    s^G =\lim_{n \to \infty} \frac{d_G(X_j,o)}{j}
  \]
  is positive almost surely.
\end{corollary}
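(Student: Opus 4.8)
The plan is to establish existence of both limits by subadditivity and positivity by combining the invariant nonamenability supplied by Theorem~\ref{thm:lat} with the fundamental inequality $h\le \ell\cdot v$ (in the form of Guivarc'h, adapted to stationary random walks as in \cite{BenjaminiCurien}) relating the asymptotic entropy $h$, the exponential volume-growth exponent $\ell$, and the speed $v$. For existence, pass to the shift-invariant probability measure $\mathbb{Q}$ of Theorem~\ref{thm:bias}, under which $((G,X_j,\psi_j))_{j\ge1}$ is stationary. Both $\dM(X_n,o)$ and $d_G(X_n,o)$ are subadditive along this sequence: for instance $\dM(X_{m+n},o)\le \dM(X_m,o)+\dM(X_{m+n},X_m)$, and, by the isometry-equivariance of the re-embedding $\psi_m$, $\dM(X_{m+n},X_m)$ is the value of $\dM(X_n,o)$ in the environment seen from the particle at time $m$ (and similarly for $d_G$). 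Kingman's subadditive ergodic theorem --- applicable since these sequences are nonnegative --- gives $\dM(X_n,o)/n\to s$ and $d_G(X_n,o)/n\to s^G$ $\mathbb{Q}$-a.s.\ and hence, by mutual absolute continuity, a.s.\ under the law of $(G,o,\psi)$; the limits are shift-invariant, so constant on each ergodic component, and it suffices to work on one such component. The increments of $d_G(X_n,o)$ are bounded by $1$, so $s^G\in[0,1]$; for $s$ one uses the finiteness of the expected total edge length $\Exp[\sum_{v\sim o}\dM(v,o)]$ at $o$ (which holds, e.g., for Poisson nuclei) to see $s<\infty$. It remains to prove $s>0$, and under the extra hypotheses $s^G>0$.

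For positivity of the embedded speed: by Theorem~\ref{thm:lat}, nonamenability of $\mathbb{M}$ makes $(G,o,\psi)$ invariantly nonamenable, so Theorem~\ref{thm:bdedexpander} supplies a unimodular percolation $\omega$ with $i^E(G|_\omega)\ge\alpha>0$ all of whose vertices have $G$-degree at most $D<\infty$ (both deterministic). Since $i^E(G|_\omega)>0$, no component of $G|_\omega$ can be finite, so on the positive-probability event $\{\omega(o)=1\}$ the cluster $K_\omega(o)$ is an infinite, uniformly nonamenable, bounded-degree unimodular rooted graph. Such a subgraph forces the simple random walk on $(G,o)$ to have a nontrivial tail $\sigma$-field, i.e.\ positive asymptotic entropy $h>0$ --- I would extract this from the boundary theory of unimodular random networks in \cite{AL} and \cite{AHNR}. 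Now apply the fundamental inequality with the metric $\dM$: $h\le\ell^{\mathbb{M}}\,s$, where $\ell^{\mathbb{M}}$ is the exponential growth rate of the number of nuclei in balls $\MB(o,r)$. That number has mean $\lambda\,\VolM(\MB(o,r))$, and $\VolM(\MB(o,r))$ grows at most exponentially in $r$ by Bishop--Gromov (a symmetric space has Ricci curvature bounded below), so $\ell^{\mathbb{M}}<\infty$ a.s.; hence $s\ge h/\ell^{\mathbb{M}}>0$.

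For the graph speed we now assume $\mathbb{M}$ is simply connected and nonpositively curved and the nuclei form a Poisson process. By Theorem~\ref{thm:pv} this is again a distributional lattice, and $(G,o,\psi)$ is invariantly nonamenable, so $h>0$ exactly as above. The fundamental inequality read with the graph metric gives $s^G\ge h/\ell^G$ with $\ell^G=\lim_{n\to\infty}\tfrac1n\log|B_G(o,n)|$, so it remains to prove $\ell^G<\infty$. This is where the Hadamard hypothesis enters, through the additional structure developed for Poisson--Voronoi tessellations: in a Hadamard manifold the diameter of a Voronoi cell and the number of Delaunay neighbours of its nucleus are controlled by the radii of the empty circumscribed balls witnessing the edges, and an empty ball of radius $t$ occurs with probability $\exp(-\lambda\,\VolM(\MB(z,t)))$; integrating this exponentially small quantity against the at-most-exponential volume growth of $\mathbb{M}$ gives a moment (indeed exponential-moment) bound on $\Deg(o)$ strong enough that $|B_G(o,n)|$ grows at most exponentially a.s., so $\ell^G<\infty$. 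Therefore $s^G\ge h/\ell^G>0$.

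I expect the main obstacle to be precisely the bound $\ell^G<\infty$: quantifying the geometry of Poisson--Voronoi cells in a Hadamard manifold in terms of empty circumscribed balls and converting it into an exponential bound on the graph growth is the ``additional structure for Poisson--Voronoi tessellations'' promised in the abstract, it is the only place the curvature and Poisson hypotheses are genuinely used (a general distributional lattice may have degrees heavy enough to make $\ell^G$ infinite and $s^G$ vanish), and it should constitute the bulk of the work. A secondary delicate point is the implication ``invariant nonamenability $\Rightarrow h>0$'': what is needed to run the fundamental inequality is nontriviality of the tail $\sigma$-field, not merely nonamenability or a spectral-radius gap, and this must be read off from the bounded-degree nonamenable subgraph of Theorem~\ref{thm:bdedexpander} via the theory of unimodular random networks. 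Everything else --- Kingman's theorem, mutual absolute continuity, ergodic decomposition, Bishop--Gromov, and the fundamental inequality itself --- is standard.
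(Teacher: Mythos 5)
Your route through the entropy--speed--growth inequality diverges from the paper's argument, and it has a genuine gap at its pivot: the claim that the bounded-degree expander subgraph $G\vert_\omega$ produced by Theorem~\ref{thm:bdedexpander} forces the simple random walk on the \emph{full} graph $G$ to have positive Avez entropy $h>0$. This is not a black box you can extract from \cite{AL} or \cite{AHNR}. The spectral gap lives on the subgraph, not on $G$: the walk on $G$ has unbounded degrees and may have spectral radius $1$ and slowly decaying transition probabilities (it can linger in amenable regions of $G\setminus G\vert_\omega$), so no bound of the form $\max_y p_n(o,y)\le e^{-cn}$ is available, and positive entropy of the \emph{induced} walk on $G\vert_\omega$ does not transfer to $H(X_n)\gtrsim n$ without further work. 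In fact the standard way to prove $h>0$ in this setting is to first prove positive speed --- which is exactly what you are trying to deduce from $h>0$ --- so the fundamental inequality is pointed in the wrong direction here. The paper sidesteps entropy entirely: it passes to the induced walk $Y_k=X_{T_k}$ on $G\vert_\omega$, where the spectral gap and bounded degrees give $\Pr[Y_k=y]\le C\rho^k$ uniformly; combined with the at-most-exponential growth of $|\mathcal{P}_o\cap \MB(o,R)|$ (Markov plus Borel--Cantelli) a union bound gives $\dM(Y_k,o)\ge\beta k$ eventually, and the ergodic theorem under $\mathbb{Q}$ (positive density of visit times to $G\vert_\omega$) transfers linear escape back to $X_j$. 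Note this argument is intrinsically about the embedded metric, which is why the unconditional claim is only for $s$ and not $s^G$.

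Two further points on the graph-speed half. First, the implication you propose --- an exponential-moment bound on $\Deg(o)$ implies $|B_G(o,n)|$ grows at most exponentially a.s.\ --- is false as stated: control of the root degree says nothing about degrees along graph geodesics. What is actually proved (Proposition~\ref{prop:expgrowth}) is the containment $\psi(B_G(o,R))\subseteq\MB(o,tR)$ with high probability, obtained by comparing Delaunay geodesics against a co-compact lattice and using the isoperimetric inequality for unions of empty balls; the exponential bound on $|B_G(o,R)|$ is a corollary of that containment, not of degree moments. Second, once that containment is available, the paper does not need any entropy inequality for the graph speed: $\psi(B_G(o,r))\subseteq\MB(o,tr)$ gives $d_G(X_j,o)\ge \dM(X_j,o)/t$ directly, hence $s^G\ge s/t>0$. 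Your conditional deduction $s^G\ge h/\ell^G$ would also work given $\ell^G<\infty$, but it inherits the unproved hypothesis $h>0$. The Kingman/stationarity part of your write-up matches the paper and is fine.
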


This extends the conclusions of \cite{BPP} on random walk on the Poisson-Delaunay graph in the hyperbolic plane, where it was shown that $s >0$ almost surely.  See also \cite{CLP} where the low--intensity asymptotics of the speed are determined for the Poisson--Delaunay graph in $\mathbb{H}^d$.

\begin{proof}
  The proof is identical to \cite[Theorem 1.1]{BPP}, with the random graph $G\vert_\omega$ taking the place of $G \setminus \text{IS}.$  We outline the proof here.
  
  By the subadditive ergodic theorem \cite[Theorem 6.1]{Durrett}, both $s$ and $s^G$ exist almost surely.  Hence it will suffice to show positive $\limsup$ speed.

  Using that the intensity of $\mathcal{P}$ is a multiple of Riemannian volume measure, by appealing to Markov's inequality and Borel-Cantelli, the number of points in an $\mathbb{M}$-ball of radius $R$ has exponential growth in $R$, i.e.
  \[
    |\left\{ x \in \mathcal{P}_o : \dM(x,o) \leq R\right\}| \leq M e^{\alpha R},\quad \text{for all $R > 0$}
  \]
  for some deterministic $\alpha$ and some almost surely finite random variable $M.$

  Let $\left( T_k \right)_1^\infty$ be the times at which $X_j \in G\vert_\omega.$  Then the induced random walk $\left( Y_k = X_{T_k}: k=1,2,\dots \right)$ has a spectral gap, and so its transition probabilities decay exponentially in $k.$  Hence applying Borel--Cantelli, we have that
  \[
    \dM(Y_k,o) \geq \beta k
  \]
  for some $\alpha >0$ and all $k$ sufficiently large.

  We then transfer the result to $X_k$ by invoking the ergodic theorem (under $\mathbb{Q}$) due to which 
  \[
    \lim_{n\to\infty} \frac{1}{n} \sum_{j=1}^n \one\left[ X_k \in G\vert_\omega \right] > 0,
  \]
  which shows that there is at least a subsequence $k_n$ along which 
  \[
    \dM(X_{k_n},o) > \gamma k_n.
  \]
  As the limit $s$ exists almost surely, it follows it is positive almost surely.

  As for the graph speed, we have by Proposition~\ref{prop:expgrowth} that there is a constant $t > 0$ so that for all $r$ sufficiently large
  \[
    \psi(B_{G}(o,r))
    \subset
    \MB(o,tr),
  \]
  and hence the graph speed must also be positive.

\end{proof}

\subsection*{Outline of the paper}

In Section \ref{sec:dl}, we prove Theorem~\ref{thm:la}.
In Section \ref{sec:pv}, we show that the Poisson--Voronoi tessellation is always a distributional lattice.  Some of the facts developed here are also applied in Section \ref{sec:dl}.
In Section \ref{sec:npc}, we prove some additional properties of Poisson--Voronoi tessellations in nonpositively curved spaces.  Specifically, we show that the graph ball $B_G(o,R)$ for large $R$ fits with high probability inside the manifold ball $\MB(o,tR)$ for some $t > 0$ (Proposition~\ref{prop:expgrowth}).  We also show that Poisson--Voronoi tessellations in simply connected, nonpositively curved spaces are \emph{sofic} in the sense that they are local limits of uniformly rooted finite random networks (Proposition~\ref{prop:sofic}).

\subsection*{Acknowledgements}
The author would like to thank Mat\'ias Carrasco Piaggio and Pablo Lessa for many helpful conversations.  Thanks also to Itai Benjamini for helpful comments and inspiring conversations.

\section{Distributional lattices}
\label{sec:dl}
The main goal of this section is the proofs of Theorems~\ref{thm:network} and~\ref{thm:la}.
We begin with the proof of Theorem~\ref{thm:network}.

\begin{proof}[Proof of Theorem~\ref{thm:network}]
  By \cite[Proposition 2.2]{AL}, it suffices to show that for nonnegative $f$ supported on $(G,x,y,\psi)$ with $x \sim y$ that the mass transport principle holds. For any two nonequal points $y,z \in \mathbb{M},$ let $\mathbb{B}_{y,z}$ be the bisector of $y$ and $z,$ i.e.\,the (not necessarily totally geodesic) submanifold of $\mathbb{M}$ consisting of points that are equidistant from $y$ and $z,$ and let $\mathcal{C}_{y,z}$ be the event that $\left\{ y,z \right\} \subset \mathcal{X}$ and $y$ connects to $z$ in ${G}.$  This event 
\[
  \mathcal{C}_{y,z}(\mathcal{P}_o)
= \left\{ 
  \exists~u\in \mathbb{B}_{y,z}~:~\MB(u,\dM(u,y)) \cap \mathcal{P}_o = \emptyset
\right\}.
\]

  For any bijection $\tau : \mathbb{M} \to \mathbb{M},$ let $\tau^* : \sigma(\mathcal{X}) \to \sigma(\mathcal{X})$ be the induced map on events, i.e.\,for measurable $V \subset \mathbb{M}^{\mathbb{N}}$
  \[
    \tau^*(\left\{ \mathcal{X} \in V \right\}) = 
    \left\{ \tau(\mathcal{X}) \in V \right\}.
  \]
  Let $\tau_{y,z}$ denote the isometric involution of $\mathbb{M}$ that reverses the geodesics through the midpoint $m$ between $y$ and $z$ and hence interchanges $y$ and $z.$  As $\mathbb{B}_{y,z}$ is stabilized as a set under $\tau_{y,z}$ it follows that for any $y \in \mathbb{M}$
  \[
    \tau_{o,y}^*(\mathcal{C}_{o,y}(\mathcal{P}_o)) 
    = \mathcal{C}_{o,y}(\mathcal{P}_y).
  \]
  This is because if $\MB(u,r)$ is a ball centered at a point on the bisector $\mathbb{B}_{o,y}$ not intersecting $\mathcal{P}_o$ then $\MB(\tau_{o,y}(u),r)$ is a ball centered at a point on the bisector $\mathbb{B}_{o,y}$ not intersecting $\mathcal{P}_y = \tau_{o,y}(\mathcal{P}_o),$ and vice versa. 

  Using Palm theory,
  \[
    \Exp\left[ 
      \sum_{x \in \mathcal{P}_o} f(G,o,x,\psi)
    \right]
    =
    \int_{\mathbb{M}}
    \Exp_{\mathcal{P}_{o,x}}\left[ 
      f(G,o,x,\psi)\one\left[ \mathcal{C}_{o,x}(\mathcal{P}_{o,x}) \right]
    \right]
    \Lambda_{o}(dx),
  \]
  where $\Lambda_{o}$ is the intensity of the reduced palm process $\mathcal{P}_o \setminus \left\{ o \right\}$ and $\mathcal{P}_{o,x}$ is the point process conditioned to have points at both $o$ and $x.$  By invariance of $\mathcal{P},$ the palm process $\mathcal{P}_{o,x}$ is invariant under any isometry of $\mathbb{M}$ fixing the set $\left\{ o,x \right\},$ in particular $\tau_{o,x},$ and hence
  \[
    \begin{aligned}
    \Exp_{\mathcal{P}_{o,x}}\left[ 
      f(G,o,x,\psi)\one\left[ \mathcal{C}_{o,x}(\mathcal{P}_{o,x}) \right]
    \right]
    &=
    \Exp_{\tau_{o,x}^2(\mathcal{P}_{o,x})}\left[ 
      f(G,o,x,\psi)\one\left[ \mathcal{C}_{o,x}(\mathcal{P}_{o,x}) \right]
    \right] \\
    &=
    \Exp_{\tau_{o,x}(\mathcal{P}_{o,x})}\left[ 
      f(G,x,o,\tau_{o,x}\circ \psi)\one\left[ \mathcal{C}_{o,x}(\tau_{o,x}(\mathcal{P}_{o,x})) \right]
    \right] \\
    &=
    \Exp_{\mathcal{P}_{o,x}}\left[ 
      f(G,x,o,\psi)\one\left[ \mathcal{C}_{o,x}(\mathcal{P}_{o,x}) \right]
    \right].
  \end{aligned}
  \]
  In the second equation, we have changed the integration to be over the reflected point process.  The graph topology does not change on doing this, however the embeddings change, as they correspond to the locations of the points.  In the final line, we use the distributional invariance of $\mathcal{P}_{o,x}.$  On changing the integration to be against this random variable, we no longer need to reflect the embeddings $\psi.$ Integrating this against $\Lambda_{o}(dx),$ we conclude that
  \[
    \Exp\left[ 
      \sum_{x \in \mathcal{P}_o} f(G,o,x,\psi)
    \right]
    =
    \Exp\left[ 
      \sum_{x \in \mathcal{P}_o} f(G,x,o,\psi)
    \right].
  \]
\end{proof}

We turn to the proof of Theorem~\ref{thm:la}.
This will depend greatly on the unimodularity, which we will use frequently in the following way.

\begin{lemma}
  Suppose $f$ is a Borel map from embedded rooted Delaunay graphs considered up to isomorphism to the nonnegative real numbers.  Let $(G,o,\psi)$ be the embedded Delaunay graph of an invariant Poisson point process $\mathcal{P}_o.$  Let $(G,o,\psi \times \omega)$ be a finitary unimodular percolation, and let $K_\omega(v)$ be the connected component in this percolation of $v.$
  \[
    \Exp\left[ f(G,o,\psi\times \omega) \right]
    =
    \Exp\left[ \sum_{v \in K_\omega(o)}\frac{f(G,v,\psi \times \omega)}{|K_\omega(o)|} \right].
  \]
  \label{lem:unihelper}
\end{lemma}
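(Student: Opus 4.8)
The plan is to derive this identity from the mass-transport principle \eqref{eq:mtp} by choosing the right test function on birooted networks. The key observation is that $K_\omega(o)$, the cluster of the root, is a finite set almost surely (the percolation is finitary) and, crucially, it is an \emph{intrinsic} object of the birooted network: whether two vertices lie in the same cluster, and what that cluster is, is determined by the marks $\psi \times \omega$ alone, so it transforms correctly under re-rooting. In particular, for $x \in K_\omega(o)$ one has $K_\omega(x) = K_\omega(o)$ as subsets of $V$, and $|K_\omega(x)| = |K_\omega(o)|$.

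The concrete step is: given $f$, define $g : \mathcal{G}_{**} \to [0,\infty]$ by
\[
  g(G,x,y,\psi\times\omega) = \frac{f(G,y,\psi\times\omega)}{|K_\omega(x)|}\,\one[\,y \in K_\omega(x)\,].
\]
This is Borel because $f$ is Borel and the cluster map is Borel (it is built from the marks by a countable exploration). I would then plug $g$ into \eqref{eq:mtp}. On the left-hand side, summing over $x \in V$, only $x \in K_\omega(o)$ contribute (since we need $o \in K_\omega(x)$, equivalently $x \in K_\omega(o)$), and for each such $x$ we get $f(G,o,\psi\times\omega)/|K_\omega(o)|$; summing over the $|K_\omega(o)|$ such vertices gives exactly $f(G,o,\psi\times\omega)$, so the left-hand side of \eqref{eq:mtp} is $\Exp[f(G,o,\psi\times\omega)]$. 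On the right-hand side, summing over $x \in V$, only $x \in K_\omega(o)$ contribute (we need $o \in K_\omega(x)$), and the summand is $f(G,x,\psi\times\omega)/|K_\omega(o)|$; so the right-hand side is $\Exp\big[\sum_{x \in K_\omega(o)} f(G,x,\psi\times\omega)/|K_\omega(o)|\big]$. Equating the two sides yields the claim, after relabelling the summation variable $x$ as $v$.

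One should note that unimodularity of $(G,o,\psi\times\omega)$ is exactly the hypothesis that the embedded Delaunay graph with a finitary unimodular percolation is again unimodular, which is built into the definition of a unimodular percolation in the excerpt, so \eqref{eq:mtp} applies to the augmented mark space $\Xi \times \{0,1\}$ directly. I should also remark that when $\omega(o) = 0$ the convention $K_\omega(o) = \{o\}$ makes $|K_\omega(o)| = 1$, and both sides reduce to $\Exp[f(G,o,\psi\times\omega)]$, so the formula is consistent on that event; the same convention keeps $g$ well-defined (no division by zero) everywhere.

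The only genuine subtlety — and the step I would be most careful about — is measurability and the re-rooting covariance of the cluster map: one must check that $(G,x,y,\psi\times\omega) \mapsto \one[y \in K_\omega(x)]$ and $(G,x,\psi\times\omega)\mapsto |K_\omega(x)|$ descend to Borel functions on isomorphism classes of birooted (resp. rooted) networks. This is standard for unimodular networks — the cluster is obtained by a deterministic breadth-first search using only the mark $\omega$, which commutes with network isomorphisms — but it is the place where the argument would break if $K_\omega$ depended on anything beyond the marks. Everything else is a bookkeeping computation with the two sides of \eqref{eq:mtp}.
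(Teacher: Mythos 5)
Your proof is correct and is essentially the paper's own argument: the paper uses the transport $g(G,x,y)=f(G,x,\psi\times\omega)\one[K_\omega(x)=K_\omega(y)]/|K_\omega(x)|$, which is exactly your $g$ with the two roots transposed. The only slip is that your side-by-side computation is written for that transposed version rather than the $g$ you defined (with your $g$ as written, the left side of \eqref{eq:mtp} is the cluster average and the right side is $\Exp\left[f(G,o,\psi\times\omega)\right]$), but since the mass-transport principle symmetrically equates the two sides this does not affect the conclusion.
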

\begin{proof}
  We define a new nonnegative Borel function $g$ of birooted networks by the rule
  \[
    g(G,x,y,\psi\times\omega) = \frac{f(G,x,\psi\times\omega)\one\left[ K_\omega(x) = K_\omega(y) \right]}{|K_\omega(x)|}.
  \]
  Then, on the one hand
  \[
    \Exp\left[ \sum_{y \in \mathcal{P}_o} g(G,o,y,\psi\times\omega)\right] =
    \Exp\left[ f(G,o,\psi\times \omega)  \right].
  \]
  On the other hand,
  \[
    \Exp\left[ \sum_{y \in \mathcal{P}_o} g(G,y,o,\psi\times\omega)\right]
    =
    \Exp\left[ \sum_{v \in K_\omega(o)}\frac{f(G,v,\psi \times \omega)}{|K_\omega(o)|} \right],
  \]
  and the equality of these two follows from unimodularity.
\end{proof}

We also need the following simple geometric observation about Voronoi tessellations.
\begin{lemma}
  Let $S \subset \mathbb{M}$ be a closed discrete set, and let $r>0$ be arbitrary.  Suppose $o,x \in S,$ have that $\MB(o,r) \cap \vso(x; S) \neq \emptyset.$  Then $\dM(x,o) \leq 2r.$
  \label{lem:vball}
\end{lemma}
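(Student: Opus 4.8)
The plan is to extract a witness point from the hypothesized nonempty intersection and then run a two-step triangle-inequality argument. Concretely, I would fix a point $y \in \MB(o,r) \cap \vso(x; S)$. Membership in the metric ball gives immediately that $\dM(o,y) \leq r$.

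Next I would unpack the definition of the Voronoi cell. Since $y \in \vso(x;S)$, we have $\dM(x,y) = \min_{s \in S} \dM(s,y)$. Because $o \in S$ by hypothesis, $o$ is one of the competitors in this minimum, so $\dM(x,y) \leq \dM(o,y) \leq r$. This is the only place the hypothesis $o \in S$ is used, and it is the step one must not skip: without it $o$ need not be a legitimate nucleus and the bound $\dM(x,y)\le \dM(o,y)$ would fail.

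Finally I would apply the triangle inequality in $\mathbb{M}$: $\dM(x,o) \leq \dM(x,y) + \dM(y,o) \leq r + r = 2r$, which is the claim. There is essentially no obstacle here — the statement is purely a metric-space fact about nearest-point (Voronoi) cells and uses nothing about the Riemannian or symmetric-space structure beyond the triangle inequality; the discreteness and closedness of $S$ only guarantee that $\vso(x;S)$ is well defined (the minimum is attained) but play no role in the estimate itself.
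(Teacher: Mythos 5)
Your proof is correct and is essentially identical to the paper's: extract a witness $y$ in the intersection, use $o \in S$ to get $\dM(x,y) \leq \dM(o,y) \leq r$, and conclude by the triangle inequality. No issues.
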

\begin{proof}
  Suppose there is a point $y \in \MB(o,r) \cap \vso(x; S).$ Then $\dM(o,y) \leq r,$ and $\dM(x,y) \leq \dM(y,o)$ as $y \in \vso(x;S).$  Hence $\dM(o,x) \leq 2r$ by the triangle inequality.
\end{proof}

With these observations in place, we turn to proving Theorem~\ref{thm:la}.
\begin{proof}

  The implication that $(2) \implies (3)$ is trivial.  We show that $(1) \implies (2)$ and that $(3) \implies (1),$ beginning with the former.  Let $\mathcal{P}_o$ be a distributional lattice, and let $(G,o,\psi)$ be its embedded Delaunay graph.  We wish to show that for any $\epsilon>0,$ there is a finitary unimodular percolation $\omega$ for which
  \(
  (G,o,\psi\times \omega)
  \)
has
\[
  \Exp\left[ \frac{|\partial_E K_\omega(o)|}{|K_\omega(o)} \right] \leq \epsilon,
\]
with $K_\omega(o)$ the connected component of $o.$
By Lemma~\ref{lem:unihelper},
\begin{equation}
  \label{eq:lb0}
  \Exp\left[ \frac{|\partial_E K_\omega(o)|}{|K_\omega(o)} \right] 
  =
  \Exp\left[ 
    \sum_{x : d_G(o,x)=1} \one[ \omega(\{x,o\})=1]
  \right].
\end{equation}
Let $\delta >0$ and let $\mathcal{Q}$ be a stationary Poisson process on $\mathbb{M}$ with intensity $\delta \cdot \VolM(\cdot).$  Define a percolation on $(G,o,\psi)$ by letting $\omega(v) = 1$ for all $v \in \mathcal{P}_o$ and letting 
\[
  \omega( \{x,y\} ) = \one[ \exists~q \in \mathcal{Q}~:~x,y \in \vso(q ; \mathcal{Q})],
\]
that is to say that $x$ and $y$ are connected if and only if they are in the same Voronoi cell formed by $\mathcal{Q}.$  From the almost sure finiteness of the Voronoi cells of $\mathcal{Q},$ this percolation is therefore finite.

Let $d_1$ be the distance of the closest point in $\mathcal{Q}$ to $o,$ and let $d_2$ be the distance of the second closest point in $\mathcal{Q}.$
Then
\[
  \Pr\left[ d_1 \geq t \right] = \exp(-\delta \VolM( \MB(o,t))),
\]
and conditional on $d_1,$
\[
  \Pr\left[ d_2-d_1 \geq t ~|d_1 \right] = \exp\biggl(-\delta \biggl(\VolM( \MB(o,d_1+t))-(\VolM( \MB(o,d_1)))\biggr)\biggr).
\]
Hence, integrating out the dependence on $d_1,$ we get that
\[
\Pr\left[ d_2-d_1 \geq t\right]
=
\int_0^\infty
\exp\biggl(-\delta \biggl(\VolM( \MB(o,u+t))\biggr)
\cdot \delta \partial_u(\VolM( \MB(o,u)))\,du.
\]
Observe that 
\[
\VolM( \MB(o,u+t))
\geq
\VolM( \MB(o,u))
+\VolM( \MB(o,t/2)),
\]
as it is possible to fit disjoint balls of radius $u$ and $t/2$ inside one of radius $u+t,$ and hence
\[
\Pr\left[ d_2-d_1 \geq t\right]
\leq
e^{-\delta \VolM( \MB(o,t/2))}
\cdot
\int_0^\infty
-\partial_u
\exp\biggl(-\delta \biggl(\VolM( \MB(o,u))\biggr)\,du.
\]
In particular, this probability goes to $1$ for any fixed $t > 0$ as $\delta \to 0.$

The relevance of this calculation is that if $o \in \vso(q; \mathcal{Q}),$ then the distance $\dM(o,q) = d_1.$  If $x \in \mathbb{M}$ is any point such that $\dM(o,x) < (d_2-d_1)/2,$ then 
\[
  \dM(x,q) < d_2/2 + d_1/2.
\]
On the other hand, if $q' \in \mathcal{Q}$ is any other point
\[
  \dM(x,q') \geq \dM(o,q') - (d_2-d_1)/2 \geq d_2/2 + d_1/2.
\]
Hence, we have shown that $\MB(o,(d_2-d_1)/2) \subset \vso(q; \mathcal{Q}).$
Therefore, we have that
\[
 \Exp\left[ 
    \sum_{x : d_G(o,x)=1} \one[ \omega(\{x,o\})=0]
  \right] 
  \leq 
 \Exp\left[ 
   \sum_{x : d_G(o,x)=1} \one[ \dM(x,o) \geq (d_2-d_1)/2]
  \right]. 
\]
Since $(d_2-d_1)/2 \to \infty$ in probability as $\delta \to 0,$ we get by dominated convergence and \eqref{eq:lb0}
\[
  \lim_{\delta \to 0}
  \Exp\left[ 
    \sum_{x : d_G(o,x)=1} \one[ \dM(x,o) \geq (d_2-d_1)/2]
  \right] = 0,
\]
and so it can be chosen smaller than $\epsilon,$ which completes the proof by \eqref{eq:lb0}.

  We turn to showing the implication $(3) \implies (1).$  Let $(G,o,\psi)$ be an invariantly amenable embedded Delaunay graph of a distributional lattice $\mathcal{P}_o.$ 
Let $S$ be a compact subset of the isometry group of $\mathbb{M}.$  
By Lemma~\ref{lem:vball}, for any $r>0$ 
\[
  \Pr\left[ \MB(o,r) \not\subset \vso(o) \right] 
  \leq 
  \Pr\left[ \MB(o,2r) \cap \mathcal{P}_o \neq \left\{ o \right\} \right]
  \to 0,
\]
as $r \to 0$ as $\mathcal{P}_o$ is a simple point process. Hence for some $\delta >0,$
\[
  \Pr\left[ \VolM(\vso(o)) \leq \delta \right] \leq \frac{1}{100}.
\]
Let $\epsilon > 0$ be arbitrary.  In what follows, we let $(G,o,\psi \times \omega)$ be any finitary percolation, and let $K_\omega(o)$ be its component containing $o.$
We will use this percolation to construct a finite volume measurable $V \subset \mathbb{M}$ which satisfies the condition
\begin{equation}
  \label{eq:isop}
  \max_{g \in S} \frac{\VolM(gV \triangle V)}{\VolM(V)} \leq C(\delta)\epsilon
\end{equation}
for some $C(\delta) > 0.$  Hence, on taking $\epsilon \to 0$ along some sequence, this gives the desired conclusion.  The set in question will be given by
\[
  \mathscr{V} = \bigcup_{v \in K_\omega(o)} \vso(v),
\]
and we will show there is positive probability that this set satisfies the desired condition \eqref{eq:isop} for some constant $C(\delta).$  As this occurs with positive probability, the desired set must exist.

We begin by observing that the average volume of a Voronoi cell in $K_\omega(o)$ can not be too small.  Using Lemma~\ref{lem:unihelper},
\[
  \Exp\left[ 
    \sum_{v \in K_\omega(o)}
    \frac{
      \one\left[ \VolM(\vso(v)) \leq \delta \right]
    }{|K_\omega(o)|}
  \right]
  =
  \Pr\left[ \VolM(\vso(o)) \leq \delta \right] \leq \frac{1}{100}.
\]
Hence on applying Markov's inequality, we conlude
\begin{equation}
  \Pr\left[ 
     \sum_{v \in K_\omega(o)}
    \frac{
      \one\left[ \VolM(\vso(v)) \leq \delta \right]
    }{|K_\omega(o)|}
    \geq \frac{1}{2} 
  \right]
  \leq \frac{1}{50}.
  \label{eq:la1}
\end{equation}
Hence with probability at least $49/50$ we have that $\VolM(\mathscr{V}) \geq \frac{1}{2}\delta|K_\omega(o)|.$

Conversely, the contribution of large Voronoi cells can not be too big. From dominated convergence,
\[
  \lim_{M \to \infty}\Exp\left[ \VolM(\vso(o))\one[\VolM(\vso(o)) \geq M] \right] \to 0.
\]
Applying Lemma \ref{lem:unihelper}, we can find an $M$ sufficiently large that
\[
  \Exp\left[ 
    \sum_{v \in K_\omega(o)}
    \frac{
      \VolM(\vso(v))\one[\VolM(\vso(v)) \geq M]
    }{|K_\omega(o)|}
  \right]
  \leq \epsilon.
\]
Applying Markov's inequality,
\begin{equation}
  \Pr\left[ 
     \sum_{v \in K_\omega(o)}
    \frac{
      \VolM(\vso(v))\one[\VolM(\vso(v)) \geq M]
    }{|K_\omega(o)|}
    \geq 50\epsilon 
  \right]
  \leq \frac{1}{50}.
  \label{eq:la1a}
\end{equation}

We turn to arguing that while Voronoi cells may be large (in fact they could be unbounded for a nonuniform lattice in a negatively curved space such as $\text{SL}_2(\Z) < \text{SL}_2(\R)$), their volume is necessarily mostly contained in a bounded region.
Specifically just by dominated convergence and the assumption that $\Exp \VolM(\vso(o)) < \infty$ almost surely,
\[
  \lim_{R \to \infty} \Exp[\VolM(\vso(o) \setminus \MB(o, R))] = 0.
\]
Hence, by Lemma~\ref{lem:unihelper} there is an $R$ sufficiently large that
\[
  \Exp\left[ 
    \sum_{v \in K_\omega(o)}
    \frac{
    \VolM(\vso(v) \setminus \MB(v, R))
    }{|K_\omega(v)|}
  \right] < \epsilon.
\]
Applying Markov's inequality, 
\begin{equation}
  \Pr\left[ 
     \sum_{v \in K_\omega(o)}
    \frac{
    \VolM(\vso(v) \setminus \MB(v, R))
    }{|K_\omega(v)|}
    \geq 50\epsilon 
  \right]
  \leq \frac{1}{50}.
  \label{eq:la2}
\end{equation}
By compactness of $S,$ there is an $N>0$ so that $\dM(x, s(x)) \leq N$ for all $s \in S$ and all $x \in \MB(o,R).$

We next show that most Voronoi cells have the property that shifting them by some $s \in S$ leaves them in the interior of $\mathscr{V}.$  From the finiteness of the intensity of $\mathcal{P}_o,$ we have that
\[
  \lim_{T \to \infty}\Pr\left[ |\mathcal{P}_o \cap \MB(o,2(R+N))| \geq T \right] \to 0.
\]
Hence we may find a $T$ sufficiently large that this probability is strictly less than $\epsilon.$
By Lemma \ref{lem:vball}, this $T$ also bounds the number of Voronoi cells intersecting $\MB(o,R+N).$  Using Lemma~\ref{lem:unihelper} and Markov's inequality in the same way as in \eqref{eq:la1} and \eqref{eq:la2},
\begin{equation}
  \Pr\left[ 
     \sum_{v \in K_\omega(o)}
    \frac{
      \one[|\mathcal{P}_o \cap \MB(v,2(R+N))| \geq T]
    }{|K_\omega(v)|}
    \geq 50\epsilon 
  \right]
  \leq \frac{1}{50}.
  \label{eq:la3}
\end{equation}

By assumption there is a percolation $\omega$ so that
\begin{equation*}
  \Exp\left[ \frac{|\partial_E K_\omega(v)|}{|K_\omega(v)|} \right] < \frac{\epsilon}{1+T}.
\end{equation*}
And hence, 
\begin{equation}
  \Pr\left[ 
    \frac{|\partial_E K_\omega(v)|}{|K_\omega(v)|} 
    \geq \frac{50\epsilon}{1+T}
  \right]
  \leq \frac{1}{50}.
  \label{eq:la4}
\end{equation}

From here on, we will work under the event that the complements of the union of events whose probability is estimated in \eqref{eq:la1},\eqref{eq:la1a},\eqref{eq:la2},\eqref{eq:la3},and \eqref{eq:la4}.  This complementary event has probability at least $45/50.$

Call a vertex $v \in K_\omega(o)$ \emph{good} if every nucleus of every Voronoi cell intersecting $\MB(v,R+N)$ is not {adjacent} to a vertex of $K_\omega(o)^c,$ and call it bad otherwise.  Let $B$ denote the set of bad vertices, and let $Q$ be the set of bad vertices $v \in K_\omega(o)$ that have
\[
  |\mathcal{P}_o  \cap \MB(v, 2(R+N))| \leq T,
\]
then
\(
  |Q| \leq T \cdot |\partial_E{K_\omega(o)}| \leq \epsilon \cdot |K_\omega(o)|.
\)
as $|Q|$ is at most a $T$--fold overcounting of the number of vertices in $K_\omega(o)$ adjacent to something outside $K_\omega(o),$ which is itself a lower bound for $|\partial_E{K_\omega(o)}|.$  Hence, the total number of bad vertices is at most $51 \epsilon |K_\omega(o)|.$

Now for any good vertex $v$ and any $s \in S$ we have 
\[
  s(\vso(v) \cap \MB(v,R)) \subset \MB(v,R+N) \subset \mathscr{V}.
\]
Therefore, we have that
\begin{align*}
  \VolM(s \mathscr{V} \setminus \mathscr{V})
  &\leq \sum_{v \in K_\omega(o)} \VolM(\vso(v) \setminus \MB(v,R))
  + \sum_{v \in B} \VolM(\vso(v)) \\
  &\leq 50\epsilon |K_\omega(o)| + M|B| + \sum_{v \in K_\omega(o)} \VolM(\vso(v)) \geq M\one[\VolM(\vso(v))] \\
  &\leq 50\epsilon |K_\omega(o)| + 51 M \epsilon |K_\omega(o)| + 50\epsilon |K_\omega(o)|.
\end{align*}
Now, as $|K_\omega(o)| \leq 2\delta^{-1} \VolM(\mathscr{V}),$ we have shown that
\[
  \VolM(s \mathscr{V} \setminus \mathscr{V})
  \leq (200+102M)\epsilon \delta^{-1} \VolM(\mathscr{V}).
\]
The same bound holds for 
\[
  \VolM(\mathscr{V} \setminus s(\mathscr{V}))
  =\VolM(s^{-1}(\mathscr{V}) \setminus \mathscr{V}),
\]
on account of $s^{-1}$ satisfying the same displacement bound as $s,$ and so we have shown
\[
  \VolM(s \mathscr{V} \triangle \mathscr{V})
  \leq 2(200+102M)\epsilon \delta^{-1} \VolM(\mathscr{V}) := \epsilon C(\delta) \VolM(\mathscr{V}), 
\]
as desired.

\end{proof}

\section{General Properties of Poisson--Voronoi tilings in Symmetric Spaces}
\label{sec:pv}
In this section, we study in some more detail the Poisson--Voronoi tessellations.  We will let $\Pi^\lambda$ denote a Poisson point process with intensity $\lambda \cdot \VolM.$
The main purpose of this section is to show that these tessellations are always distributional lattices, i.e.\,they have finite expected volume and degree.  In fact, more is true.

We will let $f(r) = \VolM(\MB(o,r)),$ the volume growth function.  We will need some simple estimates on this function, which we summarize as follows.
\begin{lemma}
  Suppose $\mathbb{M}$ is a noncompact Riemannian symmetric space.  Then there are constants $c_1$ and $c_2 >0$ so that for all $r \geq 1,$
  \[
    e^{c_1 r} \geq f(r) \geq c_2 r.
  \]
  Further $\lim_{r \to \infty} f(r)^{1/r}$ exists.
  \label{lem:vest}
\end{lemma}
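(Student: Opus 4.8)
The plan is to establish the three assertions separately, each by an elementary argument that exploits the homogeneity of $\mathbb{M}$: since the isometry group acts transitively, $\VolM(\MB(x,r)) = f(r)$ for every $x \in \mathbb{M}$, and the sectional curvatures take values in a fixed compact set, so $\mathrm{Ric} \geq -(n-1)\kappa^2$ for some $\kappa = \kappa(\mathbb{M}) \geq 0$, where $n = \dim \mathbb{M}$.

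\emph{The upper bound.} This is a direct application of the Bishop--Gromov volume comparison theorem: with the Ricci lower bound above, $f(r)$ is at most the volume of an $r$-ball in the space form of constant curvature $-\kappa^2$, namely a constant times $\int_0^r (\kappa^{-1}\sinh \kappa t)^{n-1}\,dt$ (interpreted as the Euclidean expression when $\kappa = 0$). This grows at most exponentially, so for a large enough $c_1$ it is bounded by $e^{c_1 r}$ once $r \geq 1$, absorbing the polynomial prefactor into the exponential.

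\emph{The lower bound.} A Riemannian symmetric space is complete, and being noncompact it is unbounded by Hopf--Rinow. Fix $r \geq 1$ and a unit-speed minimizing geodesic $\gamma : [0,r] \to \mathbb{M}$ with $\gamma(0) = o$ (run to a point at distance $r$), so that $\dM(\gamma(a),\gamma(b)) = |a-b|$. The open balls $\MB(\gamma(k/2), 1/4)$ with $0 \leq k \leq \lfloor 2r - 1/2\rfloor$ are pairwise disjoint (their centers are at distance $\geq 1/2$ apart) and each is contained in $\MB(o,r)$; by homogeneity each has volume $f(1/4) > 0$, and there are at least $r$ of them when $r \geq 1$. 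Hence $f(r) \geq f(1/4)\, r$, which gives the claim with $c_2 = f(1/4)$.

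\emph{Existence of the growth rate.} Put $a(r) = \log f(r)$; it suffices to show $a(r)/r$ converges, and since $a$ is finite, continuous, hence bounded on compact intervals, this will follow from the continuous Fekete lemma once we prove an approximate subadditivity on $[1,\infty)$. First a covering estimate: any $p \in \MB(o,r+s)$ lies on a minimizing geodesic from $o$, and retracting $p$ along it to distance $\min(r,\dM(o,p))$ yields a point of $\MB(o,r)$ within distance $s$ of $p$; hence $\MB(o,r+s) \subseteq \bigcup_{q \in \MB(o,r)} \MB(q,s)$. Choosing a maximal subset $\{q_i\}_{i=1}^{N(r)}$ of $\MB(o,r)$ with pairwise distances $> 2$, maximality gives $\MB(o,r) \subseteq \bigcup_i \MB(q_i,2)$, so $\MB(o,r+s) \subseteq \bigcup_i \MB(q_i, s+2)$ and therefore $f(r+s) \leq N(r)\,f(s+2)$ by homogeneity. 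The open unit balls about the $q_i$ are disjoint and lie in $\MB(o,r+1)$, so $N(r) \leq f(r+1)/f(1)$. Combining, $a(r+s) \leq a(r+1) + a(s+2) - a(1)$. Finally, Bishop--Gromov monotonicity of $r \mapsto f(r)/V^n_{-\kappa^2}(r)$ gives a short-range doubling $f(r+1) \leq C f(r)$ for $r \geq 1$, with $C = \sup_{r\geq 1} V^n_{-\kappa^2}(r+1)/V^n_{-\kappa^2}(r) < \infty$; substituting (applied once and twice respectively) yields $a(r+s) \leq a(r) + a(s) + C''$ for all $r,s \geq 1$, where $C'' = 3\log C - \log f(1)$. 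Then $\hat a(r) := a(r) + C''$ is subadditive on $[1,\infty)$ and locally bounded, so $\hat a(r)/r$—and hence $a(r)/r$—converges, i.e.\ $\lim_{r\to\infty} f(r)^{1/r}$ exists (and, by the two bounds just proved, lies in a bounded interval of the nonnegative reals).

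\emph{Expected main obstacle.} None of this is deep; the only point requiring care is the bookkeeping in the approximate subadditivity, where the mismatch between covering radius and packing radius is what forces the auxiliary short-range doubling estimate $f(r+1) \leq C f(r)$ — and there homogeneity alone does not suffice, so a curvature comparison must be invoked.
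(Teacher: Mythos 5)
Your proof is correct and follows essentially the same route as the paper's: a curvature comparison (Bishop--Gromov) for the exponential upper bound, disjoint balls along a long minimizing geodesic for the linear lower bound (the paper uses a unit tubular neighborhood of an infinite geodesic, an equivalent device), and a subadditivity argument for the existence of $\lim f(r)^{1/r}$. The paper's proof is only a three-sentence sketch, so your write-up---in particular the approximate-subadditivity bookkeeping and the short-range doubling estimate needed to repair the covering/packing mismatch---supplies details the paper leaves implicit, but the underlying ideas coincide.
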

\begin{proof}
  By homogeneity, the scalar curvature is bounded below by some $-\alpha < 0.$  From this lower bound, we get an exponential upper bound on the volume growth by comparing with the corresponding constant curvature space of the corresponding dimension (see for example \cite[Theorem 11.1/2]{Lee}).

  When $\mathbb{M}$ is noncompact, there is an infinite geodesic $\gamma$ connecting $o \to \infty,$ and hence we can bound the volume growth below by comparing with the portion of the volume of a unit--distance tubular neighborhood of $\gamma$ that is contained within $\MB(o,r),$ which gives rise to the lower bound.

  The existence of the limit follows from a subadditivity argument. 
\end{proof}

We use this to find a tail bound for the diameter of $\vso(o; \Pi^\lambda).$
\begin{lemma}
  Suppose $\mathbb{M}$ is a noncompact Riemannian symmetric space. There is a constant $C>0$ so that for all $R>1,$
  \[
    \Pr\left[ \vso(o;\Pi^\lambda) \not \subset \MB(o,R) \right]
    \leq Cf(R) \exp(-\lambda f(R-1)).
  \]
  \label{lem:vtail}
\end{lemma}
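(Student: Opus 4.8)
The plan is to cover $\MB(o,R)$ by finitely many small balls and argue that if each small ball contains a point of $\Pi^\lambda$, then the Voronoi cell of $o$ cannot escape $\MB(o,R)$. Concretely, suppose $y \in \vso(o;\Pi^\lambda)$ with $\dM(o,y) = R$. Walk along a geodesic from $o$ to $y$ and let $z$ be the point on this geodesic at distance $R-1$ from $o$, so $\dM(z,y) = 1$ and $\dM(z,o) = R - 1$. Any point $q \in \Pi^\lambda$ lying in $\MB(z, 1)$ would satisfy $\dM(q,y) \le \dM(q,z) + \dM(z,y) < 2$... this is not quite sharp enough, so instead I would place the auxiliary point so that it strictly beats $o$: if $q \in \MB(z,\rho)$ for suitable small $\rho$ then $\dM(q,y) \le (R-1) - \dM(z,y) + 2\rho$ using that $z$ is on the $o$--$y$ geodesic, wait — more carefully, $\dM(q,y) \le \dM(q,z) + \dM(z,y) \le \rho + (R - (R-1)) $ only bounds it by $\rho + 1$, whereas $\dM(o,y) = R$, so for $R$ large this already gives $\dM(q,y) < \dM(o,y)$, contradicting $y \in \vso(o)$. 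Thus it suffices that $\MB(z,\rho) \cap \Pi^\lambda \neq \emptyset$ for the specific point $z = z(y)$; but $z$ depends on $y$, so I need this for all such $z$ simultaneously.

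The clean way: note $z$ lies on the sphere $\partial\MB(o, R-1)$. So the bad event $\{\vso(o) \not\subset \MB(o,R)\}$ is contained in the event that some point of $\partial\MB(o,R-1)$ has an empty $\rho$-ball around it in $\Pi^\lambda$ — but actually I realize the escape can happen through any point at distance $\ge R$, and the witness $z$ ranges over all of $\partial\MB(o,R-1)$. To control this uniformly, take a maximal $\rho/2$-net $\{z_1,\dots,z_N\}$ of $\partial\MB(o,R-1)$ (which has $N \lesssim f(R)$ elements by a packing argument using Lemma~\ref{lem:vest}, since the net points carry disjoint $\rho/4$-balls inside $\MB(o,R)$). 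Then $\{\MB(z_i,\rho)\}$ covers $\partial\MB(o,R-1)$, so if every $\MB(z_i,\rho)$ contains a point of $\Pi^\lambda$, then in particular for the witness $z$ there is a net point $z_i$ within $\rho/2$ of it, hence a Poisson point within $\rho + \rho/2 < 2$ of $z$, and one rechecks the distance inequality: that Poisson point $q$ has $\dM(q,y) \le \dM(q,z) + \dM(z,y) < 2 + 1 = 3 < R = \dM(o,y)$ for $R > 3$. So the bad event is contained in $\bigcup_{i=1}^N \{\MB(z_i,\rho)\cap\Pi^\lambda = \emptyset\}$.

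By the Poisson property, $\Pr[\MB(z_i,\rho) \cap \Pi^\lambda = \emptyset] = \exp(-\lambda f(\rho))$, which is a constant not decaying in $R$ — so a naive union bound fails. The fix is to use larger witness balls: instead of a fixed radius $\rho$, use $\MB(z_i, R-1)$? No. The right choice is to exploit that the escaping point $y$ is at distance $\ge R$ while $z$ is at distance $R-1$: around $z$ we may use the ball $\MB(z, R-1)$, wait that contains $o$. Let me reconsider — the stated bound has $\exp(-\lambda f(R-1))$, which strongly suggests covering $\partial\MB(o,R-1)$ by balls of radius comparable to... Actually the cleanest route: the bad event implies there exists a point $u$ on the bisector-type locus, or more simply, that $\MB(z, \dM(z,y)) \cap \Pi^\lambda = \emptyset$ is false is automatic; I think the intended argument fixes a $1$-net $\{z_i\}$ of $\MB(o,R)$ itself (cardinality $\lesssim f(R)$), and observes: if $y \in \vso(o)$ escapes $\MB(o,R)$, pick a net point $z_i$ within distance $1$ of $y$; then $\MB(z_i, R-1) \subseteq \MB(o, \dM(o,y))$ roughly, and emptiness of $\MB(z_i,R-1)\cap\Pi^\lambda$ must hold — precisely, any $q \in \MB(z_i, R-1)$ would have $\dM(q,y) \le (R-1)+1 = R \le \dM(o,y)$, but we need strict inequality or to handle $\dM(o,y)$ exactly $R$; taking radius $R-1$ for the witness ball and using $\dM(o,y) > R$ when $y \notin \overline{\MB(o,R)}$ handles openness, with the boundary a measure-zero issue one absorbs. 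Then union bound: $\Pr[\text{bad}] \le \sum_{i=1}^N \Pr[\MB(z_i,R-1)\cap\Pi^\lambda = \emptyset] = N e^{-\lambda f(R-1)} \le C f(R) e^{-\lambda f(R-1)}$, using $N \le C f(R)$ from the packing bound (disjoint $1/2$-balls around net points fit in $\MB(o, R+1)$, and $f(R+1) \le C f(R)$ since $\lim f(r)^{1/r}$ exists, or directly from the exponential upper bound).

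The main obstacle is getting the geometry of the witness ball exactly right so that its radius is $R-1$ (to match the stated tail) rather than something smaller — this forces the witness point to be near the escaping point $y$ (so the net must be a net of $\MB(o,R)$, not of a small sphere) and requires the inequality $\dM(q,y) \le \dM(q,z_i) + \dM(z_i,y) \le (R-1) + 1$ to be leveraged against $\dM(o,y) > R$, which is delicate exactly at the boundary $\dM(o,y) = R$ and must be handled by working with the open ball and noting the relevant event $\{\vso(o) \not\subset \overline{\MB(o,R)}\}$ versus $\not\subset \MB(o,R)$ differ by a null event, or by a limiting argument in $R$. The packing estimate $N \lesssim f(R)$ is routine given Lemma~\ref{lem:vest}.
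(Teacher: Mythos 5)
Your overall strategy --- a $1$--net of cardinality $O(f(R))$, witness balls of radius $R-1$ that must be devoid of Poisson points, and a union bound against the void probability $e^{-\lambda f(R-1)}$ --- is the same as the paper's. But the final form of your argument has a genuine gap at the step ``pick a net point $z_i$ within distance $1$ of $y$.'' The escaping point $y$ satisfies only $\dM(o,y) > R$ and may lie arbitrarily far outside $\MB(o,R)$, so no point of a net of $\MB(o,R)$ (or of $\partial\MB(o,R)$) need be within distance $1$ of it. You identified this tension yourself (``the net must be a net of $\MB(o,R)$, not of a small sphere'') but resolved it in the wrong direction: enlarging the net does not help, because the region where $y$ can live is unbounded.

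The correct fix is to move the witness rather than the net. If $y \in \vso(o;\Pi^\lambda)$ then every $s \in \Pi^\lambda$ satisfies $\dM(y,s) \geq \dM(y,o)$, so the \emph{open} ball $\MB(y,\dM(y,o))$ contains no Poisson point; this observation also disposes of your strict-versus-nonstrict inequality worry, since you never need to derive a contradiction from a Poisson point exactly tying with $o$. Now let $u'$ be the point on the geodesic from $y$ to $o$ at distance exactly $R$ from $o$ (this uses that Voronoi cells are star--shaped about their nuclei, or simply the triangle inequality $\MB(u',R)\subseteq\MB(y,\dM(y,o))$, which holds because $\dM(y,u')=\dM(y,o)-R$). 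Then $\MB(u',R)$ is still empty and $u'\in\partial\MB(o,R)$, so a $1$--net $\{\Delta_j\}$ of the sphere $\partial\MB(o,R)$, of cardinality at most $Cf(R)$ by your packing argument, contains some $\Delta_j$ with $\dM(\Delta_j,u')\leq 1$, whence $\MB(\Delta_j,R-1)\subseteq\MB(u',R)$ is also empty. The union bound then closes the proof exactly as you wrote it; this is precisely the paper's argument.
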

\begin{proof}
  Let $R > 0$ be fixed.  Let $\{\Delta_j\}_1^M$ be a minimal $1$--net of the sphere $\partial \MB(o,R).$  By a sphere packing argument, $M \leq C \cdot f(R)$ for some $C>0$ depending only on $\mathbb{M}.$  Hence,
  \[
    \Pr\left[ \exists~1 \leq j \leq M:\MB(\Delta_j, R-1) \cap \Pi^\lambda =\emptyset \right] \leq Cf(R) \exp(-\lambda f(R-1)).
  \]
  If $q \in \Pi^\lambda$ is a point whose cell is adjacent to $\vso(o;\Pi^\lambda),$ then there is a point $u$ with $r=\dM(u,q)=\dM(u,o)$ and so that $\MB(u,r) \cap \Pi^\lambda =\emptyset.$  If $r \geq R,$ then if we let $u'$ be the point on the geodesic from $u$ to $o$ with $\dM(u',o)=R,$ the ball $\MB(u',R) \subset \MB(u,r).$  Further, there is a closest point $\Delta_j$ to $u'$ which is necessarily at distance less than $1$ from it.  Hence $\MB(\Delta_j, R-1) \subset \MB(u',R),$ and so is again empty.  Thus,
  \[
    \Pr\left[ \vso(o;\Pi^\lambda) \not \subset \MB(o,R) \right]
    \leq
    \Pr\left[ \exists~1 \leq j \leq M : \MB(\Delta_j, R-1) \cap \Pi^\lambda =\emptyset \right],
  \]
  completing the proof.
\end{proof}

\begin{theorem}
  \label{thm:pv}
  Let $\mathbb{M}$ be a Riemannian symmetric space.
  Let $\lambda >0$ be arbitrary, and let $\mathcal{P}^\lambda$ be a stationary Poisson process on $\mathbb{M}.$
  Then for any $k \geq 0,$ 
  \[
    \Exp\left[ \VolM(\vso(o;\Pi^\lambda))^k \right] < \infty
    \quad\quad
    \text{and}
    \quad\quad
    \Exp\left[ \deg(o)^k \right] < \infty.
  \]
\end{theorem}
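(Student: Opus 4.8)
The plan is to dispatch the compact case at once and then concentrate on noncompact $\mathbb{M}$, where the two functionals are controlled by two closely related computations built on Lemmas~\ref{lem:vest} and~\ref{lem:vtail}. By the Palm property of the Poisson process, $\Pi^\lambda$ has the law of $\{o\}\cup\mathcal{P}^\lambda$, and I abbreviate $f(r)=\VolM(\MB(o,r))$, so that $\VolM(\MB(x,r))=f(r)$ for every $x$ by homogeneity, $f$ is strictly increasing with $f(0)=0$, and $f$ satisfies the estimates of Lemma~\ref{lem:vest}. If $\mathbb{M}$ is compact then $\VolM(\vso(o))\le\VolM(\mathbb{M})<\infty$ deterministically, while $\deg(o)$ is bounded by the number of nuclei other than $o$, which is Poisson with mean $\lambda\VolM(\mathbb{M})$; both have all moments finite, so assume $\mathbb{M}$ is noncompact.

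For the volume moments I would bypass Lemma~\ref{lem:vtail} and argue directly by a Mecke-type expansion. Writing $\VolM(\vso(o))=\int_{\mathbb{M}}\one[y\in\vso(o)]\,\VolM(dy)$, taking the $k$-th power and using Fubini gives $\Exp[\VolM(\vso(o))^k]=\int_{\mathbb{M}^k}\Pr[y_1,\dots,y_k\in\vso(o)]\prod_i\VolM(dy_i)$. Since $y_i\in\vso(o)$ exactly when $\mathcal{P}^\lambda$ avoids $\MB(y_i,\dM(y_i,o))$, the integrand equals $\exp\!\bigl(-\lambda\VolM(\bigcup_i\MB(y_i,\dM(y_i,o)))\bigr)$, which is at most $\exp(-\lambda f(\max_i\dM(y_i,o)))$ because the union contains its largest ball and $f$ is increasing. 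Passing to polar coordinates and substituting $t=f(R)$ collapses the remaining integral to $k\int_0^\infty t^{k-1}e^{-\lambda t}\,dt=k!/\lambda^k$, so $\Exp[\VolM(\vso(o))^k]\le k!/\lambda^k<\infty$ (and this bound covers the compact case as well, since there merely $f(\infty)<\infty$).

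For the degree moments, set $\rho=\sup_{y\in\vso(o)}\dM(y,o)$ and $N(r)=|\mathcal{P}^\lambda\cap\MB(o,r)|$. By Lemma~\ref{lem:vball}, every Delaunay neighbour $q$ of $o$ satisfies $\dM(o,q)\le2\rho$ (its cell $\vso(q)$ meets $\vso(o)\subset\MB(o,\rho)$), so $\deg(o)\le N(2\rho)$; in particular $\rho<\infty$ a.s.\ by Lemma~\ref{lem:vtail}. Decomposing over $\{m-1\le\rho<m\}$ and applying Cauchy--Schwarz,
\[
  \Exp[\deg(o)^k]\le\sum_{m\ge1}\Exp\bigl[N(2m)^k\one[\rho\ge m-1]\bigr]\le\sum_{m\ge1}\sqrt{\Exp[N(2m)^{2k}]}\,\sqrt{\Pr[\rho\ge m-1]}.
\]
Because $N(2m)$ is Poisson with mean $\lambda f(2m)$, its $2k$-th moment is a polynomial in the mean, so $\Exp[N(2m)^{2k}]\le C_k f(2m)^{2k}$ once $\lambda f(2m)\ge1$; and Lemma~\ref{lem:vtail} gives $\Pr[\rho\ge m-1]\le Cf(m)\exp(-\lambda f(m-2))$ for $m$ large. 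The finitely many initial terms are finite Poisson moments, so it remains to prove $\sum_m f(2m)^k\sqrt{f(m)}\exp(-\tfrac{\lambda}{2}f(m-2))<\infty$.

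I expect this last series estimate to be the main (if modest) obstacle, since the bare bounds $f(2m)\le e^{2c_1m}$ and $f(m-2)\ge c_2(m-2)$ from Lemma~\ref{lem:vest} do not suffice when $\lambda c_2$ is small. The resolution is the existence of $L=\lim_r f(r)^{1/r}$ (Lemma~\ref{lem:vest}), which forces $\log f(2m)/f(m-2)\to0$ in all cases: if $L>1$ then $f(m-2)$ grows exponentially in $m$ and dwarfs the linear quantity $\log f(2m)$, whereas if $L=1$ then $\log f(2m)=o(m)$ and is dwarfed by $f(m-2)\ge c_2(m-2)$. Hence $(k+1)\log f(2m)\le\tfrac{\lambda}{4}f(m-2)$ for all large $m$, the general term of the series is at most $\exp(-\tfrac{\lambda}{4}f(m-2))\le\exp(-\tfrac{\lambda c_2}{4}(m-2))$, and convergence follows. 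The identical device applied to $\sum_m f(m+1)^{k+1}\exp(-\lambda f(m-1))$ also gives a second, Lemma~\ref{lem:vtail}-based derivation of the volume bound, should one prefer it to the Mecke computation.
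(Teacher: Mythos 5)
Your argument is correct, and it splits into one genuinely different piece and one that essentially reproduces the paper's computation. For the volume moments the paper conditions on the circumscribed radius $R$ of $\vso(o)$ and integrates the tail bound of Lemma~\ref{lem:vtail} against $kf'(R)f(R)^{k-1}\,dR$, which forces it to invoke the existence of $\lim_r f(r)^{1/r}$ to beat the polynomial prefactor; your Mecke--Fubini expansion $\Exp[\VolM(\vso(o))^k]=\int_{\mathbb{M}^k}\exp(-\lambda\VolM(\bigcup_i\MB(y_i,\dM(y_i,o))))\prod_i dV(y_i)$, followed by the observation that the union contains its largest ball and the substitution $t=f(R)$, is cleaner: it yields the explicit bound $k!/\lambda^k$, needs no growth hypotheses on $f$ beyond monotonicity, and handles the compact and noncompact cases uniformly. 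For the degree moments your route is the paper's: bound $\deg(o)$ by $N(2\rho)$ via Lemma~\ref{lem:vball}, decompose over $\{m-1\le\rho<m\}$, apply Cauchy--Schwarz against Poisson moments and the tail bound of Lemma~\ref{lem:vtail}, and control the resulting series. Your treatment of that series is in fact slightly more careful than the paper's: the dichotomy on $L=\lim_r f(r)^{1/r}$ (exponential $f(m-2)$ versus linear $\log f(2m)$ when $L>1$; $\log f(2m)=o(m)$ versus $f(m-2)\ge c_2(m-2)$ when $L=1$) is exactly the right way to justify absorbing the polynomial-in-$f$ prefactors into the exponential, where the paper's comparison $f(2j+2)\le Cf(j-1)^2$ is stated somewhat loosely. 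The only cosmetic caveats are the harmless index shifts in applying Lemma~\ref{lem:vtail} to $\Pr[\rho\ge m-1]$ and the implicit restriction to integer $k$, from which the general case follows by monotonicity of $L^p$ norms; neither affects the proof.
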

\begin{proof}
  In the case that $\mathbb{M}$ is compact, the result is trivial, as the volume of the whole manifold is finite and the degree is bounded by the total number of points, which is $\Poisson(\lambda \VolM(\mathbb{M})).$  

  In the case that $\mathbb{M}$ is noncompact, we use the bound in Lemma~\ref{lem:vtail}.  Conditional on the smallest $R$ so that
  \[
    \vso(o;\Pi^\lambda) \subset \MB(o,R),
  \]
  we can estimate $\VolM(\vso(o;\Pi^\lambda)) \leq \VolM(\MB(o,R)) = f(R).$
  Hence,
  \begin{equation}
    \label{eq:vt1}
    \Exp[\VolM(\vso(o;\Pi^\lambda))^k]
    \leq \int_0^\infty k f'(R) f(R)^k C\exp(-\lambda f(R-1))\,dR.
  \end{equation}
  
  Since $\lim_{R\to\infty} f(R)^{1/R}$ exists, for any $\delta>0$ we can estimate $f(R)^k \leq C_{\delta,k} e^{\delta f(R-1)}.$  Likewise, we can estimate $f(R-1) \geq cf(R)$ for some $c \in (0,1)$ for all $R>2.$  In particular, we can estimate the tail of the integral by
  \begin{align*}
  &\int_2^\infty f'(R) f(R)^k \exp(-\lambda f(R-1))\,dR \\
  \leq C &\int_2^\infty f'(R) \exp(-\eta f(R))\,dR 
  < \infty,
\end{align*}
for some constant $\eta >0.$

  Turning to the degree bound, with the same random $R$ as used to control the volume, we have that all neighbors of $o$ are contained in $\MB(o,2R).$  Hence
  \begin{align*}
    \Exp[\deg(o)^k] 
    &\leq \sum_{j=0}^\infty \Exp\left[ |\Pi^\lambda \cap \MB(o,2j+2)|^{k} \one[j+1 \geq R \geq j] \right] \\
    &\leq \sum_{j=0}^\infty \Exp\left[ |\Pi^\lambda \cap \MB(o,2j+2)|^{k} \one[ R \geq j] \right] \\
    &\leq \sum_{j=0}^\infty \biggl(\Exp\left[ |\Pi^\lambda \cap \MB(o,2j+2)|^{2k}\right] \Pr[ R \geq j] \biggr)^{1/2} \\
    &\leq C_0 + C(k,\lambda)\sum_{j=2}^\infty f(2j+2)^{k} f(j)^{1/2}e^{-\lambda f(j-1)/2} \\
    &\leq C_0 + C(k,\lambda)'\sum_{j=2}^\infty f(j-1)^{2k+1}e^{-\lambda f(j-1)/2} < \infty,
  \end{align*}
  where we have used that the volume grows at most exponentially to compare $f(2j+2) \leq Cf(j-1)^{2}$ for some $C>0$ and all $j \geq 2.$
\end{proof}

\section{Additional structure for Poisson--Voronoi tessellations in nonpositively curved spaces} 
\label{sec:npc}

Non-positive curvature is beneficial for many reasons, one of which is that the notion of convexity translates well to nonpositively curved space (see \cite[Section 1.6]{Eberlein}). We will also deal exclusively with the simply connected case, for which we recall that $\mathbb{M} = \R^d \times \prod_{i=1}^r \mathbb{M}_i$ for some Riemannian symmetric spaces of noncompact type $\mathbb{M}_i.$  In particular, these spaces are amenable if and only if the space is some Euclidean space.  As a consequence, when there is a nonEuclidean factor, the volume growth is necessarily exponential, i.e.\,
\begin{equation}
 \lim_{r \to\infty} f(r)^{1/r} = h > 0.
 \label{eq:volume}
\end{equation}
For Riemannian symmetric spaces of noncompact type, more precise estimates are available \cite{Knieper}.


By a theorem of Borel~\cite{Borel}, in any simply connected Riemannian symmetric space, there is a co-compact lattice $\Gamma.$
This means there is a countable collection of points $\Lambda$ and a group of isometries of $\mathbb{M}$ that act transitively on $\Lambda,$ with the further property that the Voronoi cells with nuclei $\Lambda$ are bounded.  Let $\mathscr{L}$ denote the dual graph of these cells, which by virtue of the transitive action on $\Lambda$ becomes a transitive graph.  To a point $y \in \Lambda,$ we define $\mathcal{L}(y),$ the Voronoi cell in the $\Lambda$-nucleated tessellation that is centered at $y.$  The large-scale geometry of $\mathbb{M}$ is captured by the large-scale geometry of $\mathscr{L}:$ any map $\pi : \mathbb{M} \to \Lambda$ with the property that if $\pi(x) = y$ then $x \in \mathcal{L}(y)$ is a quasi-isometry of the two spaces. 

  \begin{proposition}
	  Let $\mathbb{M}$ be a nonpositively curved, nonamenable, simply connected Riemannian symmetric space.
	  Let $(G,o,\psi)$ be the embedded Delaunay graph with nuclei $\Pi^\lambda$ for some $\lambda >0.$
	  There are constants $\delta > 0$ and $t_0>0$ depending only on $\mathbb{M}$ and $\lambda$ so that for all $t > t_0$ and all $R > 1,$
	  \[
		  \Pr
		  \left[ 
			  B_{G}(o,R)
			  \centernot{\subseteq}
			  \MB(x_0,tR)
		  \right]
		  \leq e^{-Re^{\delta t}}.
	  \]
	  As a consequence, there are constants $\alpha,\beta,R_0 > 0$ depending only on $\mathbb{M}$ and $\lambda$ so that for all $R>R_0,$
  \[
		  \Pr
		  \left[ 
			  \left|
			  B_{G}(o,R)
			  \right|
			  \geq e^{\alpha R}
		  \right]
		  \leq e^{-\beta R}.
  \]
  In particular, we have that 
  \[
    \limsup_{r\to \infty}|B_{G}(o,R)|^{1/R} < \infty,
  \]
  almost surely.
  \label{prop:expgrowth}
\end{proposition}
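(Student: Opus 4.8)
The plan is to pass to Borel's co-compact lattice graph $\mathscr{L}$ on nuclei $\Lambda$ (bounded degree $D$, cells $\mathcal{L}(y)$ of diameter at most $\ell_0$) and run a subcritical site-percolation / Peierls argument, exploiting that in a nonpositively curved space a single edge of $G$ spans only a bounded multiple of the diameter of the current Voronoi cell, while large cells are \emph{doubly} exponentially rare. Two deterministic inputs set this up. First, if $v\sim w$ in $G$ there is an empty ball $\MB(z,r)$ with $v,w\in\partial\MB(z,r)$; as $z\in\vso(v)$ this forces $r\le \diamM(\vso(v))$, hence $\dM(v,w)\le 2\,\diamM(\vso(v))$, so $B_G(o,R)\not\subseteq\MB(o,tR)$ implies the existence of a $G$-geodesic $o=v_0,\dots,v_k$ with $k\le R$ along which $\sum_{j<k}\diamM(\vso(v_j))>tR/2$. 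Second, since $\diamM(\vso(o))\le 2r$ whenever $\vso(o)\subseteq\MB(o,r)$, Lemma~\ref{lem:vtail} together with the exponential volume growth \eqref{eq:volume} yields constants $c,\delta_0>0$ with $\Pr[\diamM(\vso(o))\ge s]\le\exp(-c\,e^{\delta_0 s})$ for all $s\ge 1$.

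Fix $t$ large, put $s=t/8$, and fix a thickening radius $\rho$ of order $t$. Call $y\in\Lambda$ \emph{black} if some nucleus $q\in\Pi^\lambda$ with $\dM(q,y)\le\rho$ has $\diamM(\vso(q))\ge s$. Every such $q$ lies in $\MB(y,\rho)$, which carries a $\mathrm{Poisson}(\lambda f(\rho))$ number of points, so a union bound over those points and the tail just displayed give $\Pr[y\text{ black}]\le C\lambda f(\rho)\,e^{-c e^{\delta_0 s}}\le e^{-c'e^{\delta_1 t}}$ once $t$ is large. Moreover blackness of $y$ is a function of $\Pi^\lambda$ restricted to a bounded neighbourhood of $\mathcal{L}(y)$, hence of finitely many nearby lattice cells, so the black field is finite-range dependent; by the Liggett--Schonmann--Stacey domination theorem it is stochastically dominated by an i.i.d.\ $\mathrm{Bernoulli}(q)$ field on $\mathscr{L}$ with $q\le\exp(-c''e^{\delta_2 t})$.

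On the geodesic above, separate the cells of diameter $<s$ (total contribution $<Rs<tR/4$) from the rest, which then contribute more than $tR/4$. The sub-event that one of the latter is \emph{enormous}, say of diameter at least $tR$, is dispatched directly: the first such cell lies in $\MB(o,2tR^2)$, and each of the $e^{O(tR^2)}$ lattice cells there hosts a cell of that size with probability at most $\exp(-\lambda f(tR/8))$, which is doubly exponentially small in $R$ and so swamps the count. In the complementary case one is left with $\Omega(R)$ cells of diameter in $[s,tR)$ along the geodesic; grouping them over the $O(\log R)$ dyadic scales in that window and passing to a dominant scale, the associated black lattice cells --- after filling in the $O(1)$-edge bridges induced along the geodesic --- form a connected subset $Z$ of $\mathscr{L}$ with $|Z|=\Omega(R)$ (the logarithmic loss absorbed into constants) meeting $B_{\mathscr{L}}(\pi(o),O(tR))$. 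Using the domination and the standard bound $(eD)^n$ on the number of connected $n$-vertex subsets of $\mathscr{L}$ through a fixed cell,
\[
  \Pr\bigl[B_G(o,R)\not\subseteq\MB(o,tR)\bigr]\;\le\;\bigl|B_{\mathscr{L}}(\pi(o),O(tR))\bigr|\sum_{n=\Omega(R)}(eD)^n q^n\;\le\;D^{O(tR)}(eDq)^{\Omega(R)},
\]
and since $q\le\exp(-c''e^{\delta_2 t})$, for $t$ large the factor $(eDq)^{\Omega(R)}=\exp(-\Omega(R)\,e^{\delta_2 t})$ dominates the region factor $D^{O(tR)}$, giving the asserted $e^{-Re^{\delta t}}$ for a suitable $\delta>0$ and all $t>t_0$. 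I expect this last bookkeeping to be the crux: a priori $B_G(o,R)$ could fill a manifold ball containing exponentially many (in $R$) lattice cells, so a naive cell-by-cell union bound is hopeless; what rescues the estimate is that reaching distance $tR$ necessarily passes through $\Omega(R)$ of these super-rare black cells, so the resulting $q^{\Omega(R)}$ outruns every enumeration factor, and the care lies in organizing the multi-scale decomposition so that the thickening stays of order $t$ and the cluster size stays of order $R$.

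For the consequence, fix a constant $t\ge t_0$. On the event $B_G(o,R)\subseteq\MB(o,tR)$, whose complement has probability at most $e^{-R}$, we have $|B_G(o,R)|\le|\Pi^\lambda\cap\MB(o,tR)|$, a Poisson variable of mean $\lambda f(tR)\le\lambda e^{c_1 tR}$, so a Poisson upper-tail estimate bounds $\Pr[\,|B_G(o,R)|\ge e^{\alpha R}\,]$ by $e^{-\beta R}$ with $\alpha=2c_1 t$ and a suitable $\beta>0$. Summing over $R$ and applying Borel--Cantelli gives $\limsup_R|B_G(o,R)|^{1/R}\le e^{\alpha}<\infty$ almost surely.
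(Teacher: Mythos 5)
Your reduction to ``a $G$-geodesic of length at most $R$ along which the cell diameters sum to at least $tR/2$'' is sound, and so is the doubly exponential tail for a single cell diameter. The argument breaks at the stochastic-domination step. The event $\{\diamM(\vso(q))\geq s\}$ is determined by $\Pi^\lambda$ restricted to a ball of radius comparable to $s$ around $q$ (you cannot certify that a cell reaches out to distance $s/2$ without examining a window of radius $\asymp s$), so with $s\asymp t$ and thickening radius $\rho\asymp t$ the black field on $\mathscr{L}$ has dependency range $\asymp t$, hence dependency degree $\Delta\asymp e^{cht}$ by the exponential volume growth \eqref{eq:volume}. Liggett--Schonmann--Stacey only yields a dominating density of order $p^{1/(\Delta+1)}$, and here $-\log p\asymp \lambda e^{hs/2}=\lambda e^{ht/16}$ while $\Delta\gtrsim e^{ht/4}$, so $p^{1/\Delta}\to 1$ as $t\to\infty$: the theorem does not give $q\leq\exp(-c''e^{\delta_2 t})$, and the same loss occurs if you instead extract a $2t$-separated independent subset of your cluster $Z$. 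Since the whole Peierls computation rests on $(eDq)^{\Omega(R)}$ beating $D^{O(tR)}$, this is fatal as written. A second, independent problem is the claim that the dominant dyadic scale produces $|Z|=\Omega(R)$ black cells: the budget $tR/4$ can be met by $O(1)$ cells of diameter $\asymp tR$ (your ``enormous'' case only covers diameter $\geq tR$, not the range in between), and even when many scales contribute you lose a $\log R$ in the exponent, which is not compatible with the stated bound $e^{-Re^{\delta t}}$ uniformly in $R>1$.

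The paper's proof avoids dependency losses entirely by never treating ``a large cell'' as a single percolation variable. Along a geodesic in an augmented graph $G'$ it produces, for each step, an empty witness ball $\MB(z_i,r_i)$, shows the centers are $\Delta/2$-separated (using the geodesic property), and invokes the isoperimetric Lemma~\ref{lem:YS}: nonamenability forces $\VolM(\cup_i \MB(z_i,r_i))\geq \sum_i e^{\alpha r_i}/C$. That volume is tiled by $\sum_i e^{\alpha r_i}/C'$ \emph{disjoint} lattice cells, all of which must be empty of Poisson points; these emptiness events are exactly independent, so the probability is $p^{\sum_i e^{\alpha r_i}/C'}$ with no $1/\Delta$ loss, and Jensen's inequality ($\sum_i e^{\alpha r_i}\geq k\,e^{\alpha\bar r}$) handles the tradeoff between few huge balls and many moderate ones that your dyadic decomposition mishandles. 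If you want to salvage your scheme, the fix is to replace ``black site'' by ``this specific lattice cell is empty of $\Pi^\lambda$'' and count how many such cells a configuration of large Voronoi cells along the geodesic forces --- which is precisely the paper's route.
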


To prove this, we begin with the following Lemma on the volume of a union of balls in a nonamenable space.
\begin{lemma}
	Let $\mathbb{M}$ be a nonamenable Riemannian symmetric space.  There is an $\alpha > 0$ so that the following holds. 
	For any $\Delta>0,$ there is a constant $C >0$ so that for any finite collection of balls 
	\(
	\left\{\MB(x_i,r_i)\right\}_{i=1}^t
	\)
	with $\dM(x_i,x_j) \geq \Delta$ whenever $i \neq j,$ we have
	\[
		\VolM\left( \cup_{i=1}^t \MB(x_i,r_i) \right)
		\geq
		\sum_{i=1}^t 
		\frac{e^{\alpha r_i}}{C}\one[r_i \geq \tfrac \Delta 2].
	\]
	\label{lem:YS}
\end{lemma}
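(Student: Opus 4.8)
The plan is to reduce the statement to a single-ball volume estimate together with a counting argument that controls the multiplicity of overlaps. First I would use nonamenability to extract the exponential lower bound for one ball: since $\mathbb{M}$ is a nonamenable Riemannian symmetric space, it has exponential volume growth, so by Lemma~\ref{lem:vest} there is $\alpha > 0$ (we may take $\alpha$ smaller than the exponential growth rate $c_1$) and a constant $c_\alpha > 0$ with $\VolM(\MB(x,r)) \geq c_\alpha e^{\alpha r}$ for all $r \geq 1$; absorbing the regime $r \in [\Delta/2, 1]$ into the constant, we get $\VolM(\MB(x_i,r_i)) \geq \tfrac{1}{C_1} e^{\alpha r_i}$ whenever $r_i \geq \Delta/2$, for a constant $C_1$ depending only on $\mathbb{M}$ and $\Delta$.

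Next I would handle the overlaps. The naive inequality $\VolM(\cup_i \MB(x_i,r_i)) \geq \max_i \VolM(\MB(x_i,r_i))$ is too weak, and $\sum_i \VolM(\MB(x_i,r_i))$ overcounts. The key geometric input is that, because the centers are $\Delta$-separated, any fixed point $y \in \mathbb{M}$ can lie in $\MB(x_i, r_i)$ for several indices $i$, but — crucially — it can lie in the \emph{annular shell} $\MB(x_i,r_i) \setminus \MB(x_i, r_i - \Delta/2)$ of only boundedly many of them whose radii are comparable. More precisely, I would sort the balls by radius and apply a greedy/peeling argument: order the radii $r_{(1)} \geq r_{(2)} \geq \cdots$, and for each ball charge it the volume of the portion of $\MB(x_{(k)}, r_{(k)})$ not already covered by balls of strictly larger radius whose centers are within $2r_{(k)}$. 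Since the centers are $\Delta$-separated, a volume-packing bound (balls of radius $\Delta/2$ around the centers are disjoint and sit inside $\MB(x_{(k)}, 2r_{(k)} + \Delta/2)$) shows that the number of competing larger balls is at most $\VolM(\MB(o, 3r_{(k)}))/\VolM(\MB(o,\Delta/2))$, which is exponential in $r_{(k)}$ — and that is still too many to win directly. So instead I would use the sharper observation that a larger ball $\MB(x_{(j)}, r_{(j)})$ with $j < k$ can cover the "new" half-radius core $\MB(x_{(k)}, r_{(k)}/2)$ only if $\dM(x_{(j)}, x_{(k)}) \leq r_{(j)} + r_{(k)}/2$, but then by $\Delta$-separation the cores $\MB(\cdot, \Delta/2)$ force a packing bound, and combined with the exponential growth one checks that the total volume covered by the cores, counted with multiplicity, is $O(1)$ per point — giving $\VolM(\cup_i \MB(x_i,r_i)) \geq \tfrac{1}{C} \sum_i \VolM(\MB(x_i, r_i/2))$, which by the single-ball estimate is $\geq \tfrac{1}{C'} \sum_i e^{(\alpha/2) r_i} \one[r_i \geq \Delta/2]$; after relabelling $\alpha/2$ as $\alpha$ this is the claim.

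A cleaner route I would actually pursue: for each $i$ with $r_i \geq \Delta/2$, let $y_i$ be the midpoint of a geodesic from $x_i$ to a point at distance $r_i$ from $x_i$ along a ray, so that $\MB(y_i, r_i/2) \subset \MB(x_i, r_i)$ and $\dM(x_i, y_i) = r_i/2$. The balls $\{\MB(y_i, r_i/2)\}$ still lie in the union, and now I exploit that $\dM(y_i, y_j)$ is bounded below in terms of $\Delta$ only when the $r_i$ are comparable; grouping the indices into dyadic blocks according to $r_i \in [2^\ell, 2^{\ell+1})$ and noting that within a block the shifted centers are $(\Delta/2)$-separated (or can be made so) while across blocks the volumes are geometrically spaced, I would sum a geometric series over $\ell$ against the block with the largest radius. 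The main obstacle I expect is precisely making the multiplicity bound uniform in $\Delta$ rather than letting the constant blow up as $\Delta \to 0$: the lemma only claims a constant $C$ depending on $\Delta$, so this is acceptable, but one must be careful that the exponent $\alpha$ does \emph{not} depend on $\Delta$, which is why the argument must ultimately rest on the dimension-free exponential growth rate of $\mathbb{M}$ rather than on local packing numbers.
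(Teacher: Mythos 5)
Your opening step (the single-ball bound $\VolM(\MB(x,r)) \geq e^{\alpha r}/C_1$ for $r \geq \Delta/2$, which does follow from exponential volume growth) is fine, but neither of your routes controls the overlaps, and the specific claims you lean on are false. The covering multiplicity is not $O(1)$ per point, nor even polynomially bounded: take the $x_i$ to be a maximal $\Delta$-separated subset of $\MB(o,R)$ and all $r_i = r$ with $r \geq 4R$; then every point of $\MB(o, r/2 - R)$ lies in every one of the $\asymp e^{hR}$ half-radius cores $\MB(x_i, r_i/2)$, so the multiplicity is exponentially large in $R$, and $\Delta$-separation of the centers gives no packing control at scales $r_i \gg \Delta$. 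The same example defeats the ``cleaner route'': two $\Delta$-separated centers with $r_i,r_j \geq \Delta$ can have shifted centers $y_i = y_j$, and even a genuinely $(\Delta/2)$-separated family of balls of radius $r_i/2 \gg \Delta$ overlaps with unbounded multiplicity, so the volumes within a dyadic block cannot be summed either; nor is the sum over blocks dominated by the largest block in general. Any strategy of the form ``sum the ball volumes and divide by a bounded multiplicity'' (including a Vitali-type extraction) is structurally doomed here, because in a space of exponential growth a selected ball can absorb exponentially many rejected ones.

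The paper's proof never sums ball volumes. Nonamenability enters not through volume growth but through the linear isoperimetric inequality $|\partial A| \geq h \VolM(A)$: writing $W$ for the union of the balls with $r_i \geq \Delta/2$ and integrating by parts over the level sets of $x \mapsto \dM(x,W^c)$, one obtains, for any $\alpha < h$,
\[
  \VolM(W) \;\geq\; \frac{h-\alpha}{h}\int_W e^{\alpha \dM(x,W^c)}\,dV(x).
\]
The $\Delta$-separation is then used only where it is genuinely available: the small balls $\MB(x_i,\Delta/2)$ are pairwise disjoint, and on $\MB(x_i,\Delta/2)$ one has $\dM(x,W^c) \geq r_i - \Delta/2$, so the integral is at least $c(\alpha,\Delta)\sum_i e^{\alpha r_i}\one[r_i \geq \Delta/2]$. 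Note also that $\alpha$ is any constant below the Cheeger constant $h$, not the volume growth rate. If you want to salvage your plan, the missing idea is exactly this isoperimetric amplification of volume deep inside $W$; there is no purely combinatorial packing substitute.
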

\begin{proof}
	As $\mathbb{M}$ is nonamenable, there is a constant $h=h(\mathbb{M}) > 0$ so that for all piecewise smooth compact submanifolds $A$ with boundary,
\[
	|\partial A| \geq h \VolM(A),
\]
where $|\partial A|$ denotes the surface measure of $\partial A.$  

Set
\[
	W = \bigcup_{\substack{i\in\left\{1,\dots,t\right\} \\ r_i \geq \Delta/2}} \MB(x_i,r_i).
\]
We have that for any continuous $\phi : \R \to \R^{+},$
setting
$\Phi(x) = \int_0^x \phi(y)\,dy,$
that
\begin{align}
	\int_W \phi( \dM(x,W^c))\,dV(x)
	\geq 
	h \int_W \Phi( \dM(x,W^c))\,dV(x),
	\label{eq:ibp}
\end{align}
by decomposing the integral into level sets of $\dM(x,W^c)$ and applying integration by parts.

Let $\alpha > 0$ be any constant strictly less than $h.$  
Using \eqref{eq:ibp} with $\phi(x) = e^{\alpha x}$, we therefore get that
\[
	\int_W e^{\alpha \dM(x,W^c)}\,dV(x)
\geq
\frac{h}{\alpha} 
\left(
\int_W e^{\alpha \dM(x,W^c)}\,dV(x)
-\VolM(W)
\right).
\]
Rearranging, we get that
\begin{equation}
	\VolM(W)
	\geq
	\frac{h-\alpha}{h}
	\int_W e^{\alpha \dM(x,W^c)}\,dV(x).
	\label{eq:volest}
\end{equation}

We note that there is a constant $C=C(\alpha, \Delta) >0$ so that for any $1 \leq i \leq t$ with $r_i \geq \tfrac \Delta 2,$
\[
e^{\alpha r_i}
\leq C \int_{\MB(x_i,\Delta/2)} e^{\alpha \dM(x,W^c)}\,dV(x).
\]

Hence
\begin{align*}
		\sum_{i=1}^t 
		{e^{\alpha r_i}}\one[r_i \geq \tfrac \Delta 2]
		&\leq
		C
		\sum_{i=1}^t 
		\one[r_i \geq \tfrac \Delta 2]
		\int_{\MB(x_i,\Delta/2)} e^{\alpha \dM(x,W^c)}\,dV(x). \\
		&\leq
		C
		\int_{W} e^{\alpha \dM(x,W^c)}\,dV(x). 
\end{align*}
By \eqref{eq:volest}, the proof is complete.
\end{proof}

\begin{proof}[Proof of Proposition~\ref{prop:expgrowth}]

	Recall that $\Lambda$ is a collection of points on whose Voronoi cells $\{\mathcal{L}(y)\}_{y \in \Lambda}$ on which a subgroup of $\mathbb{M}$ acts transitively.  In particular this implies that all
\(
\{\mathcal{L}(y)\}_{y \in \Lambda}
\)
have the same diameter and volume.  Let $\Delta$ denote this diameter.  

We will need to compare balls in $\mathbb{M}$ to balls in $\mathscr{L},$ the dual graph of the Voronoi tessellation $\{\mathcal{L}(y)\}_{y \in \Lambda}.$
Let $x \in \mathbb{M}$ be arbitrary, and
let $y \in \Lambda$ be a point so that $x \in \mathcal{L}(y).$  There are positive constants $r_0,C_1,$ and $C_2$ so that when $r>r_0,$
\begin{equation}
	\label{eq:lattice_balls}
\MB(x,r/C_1)
\subseteq
\sum_{v \in \LB(y,r/C_2)}
{\mathcal{L}(v)} 
\subseteq \MB(x,r)
\end{equation}
%
We let $G$ denote the dual graph of the Poisson Voronoi tessellation.  Let $G'$ be the graph formed by adding to $G$ an edge between every pair of Poisson Voronoi cells that are at distance at most $\Delta.$  Note that this implies that 
\(
B_{G}(o,R)
\subseteq
B_{G'}(o,R)
\)
for all $R>0,$ and hence it suffices to show the claim for $G'.$

Suppose that $x_0x_1x_2\cdots x_R$ is a collection of $\PPP$ whose Poisson Voronoi cells form a geodesic path in $G'.$  Let $J\subseteq \left\{ 0,1,2,\ldots, R-1 \right\}$ be those indices for which $x_i$ and $x_{i+1}$ are adjacent in $G$.  For every $i \in J,$ there is an open ball 
\(
V_i
= \MB(z_i,r_i)
\) 
with $V_i \cap \PPP = \emptyset$ and $\left\{ x_i,x_{i+1} \right\} \subset \partial V_i.$  For $i \centernot{\in} J,$ there are points $u,v$ with $u$ on the boundary of $\PV(x_i)$ and $v$ on the boundary of $\PV(x_{i+1})$ so that $\dM(u,v) \leq \Delta.$  Let $r = \dM(u,x_i)$ and $s = \dM\left( v,x_{i+1} \right).$  Both of $\MB(u, r)$ and $\MB(v,s)$ do not intersect $\PPP.$  Let $z_i$ be the midpoint of $uv.$  Set $r_i = \max(r,s)-\Delta/2.$  Then provided $r_i \geq 0,$ we have $\MB(z_i,r_i)$ is contained in one of $\MB(u, r)$ or $\MB(v,s).$  In this case, let $V_i = \MB(z_i,r_i)$ be this ball, and let $V_i =\emptyset$ otherwise. 

Note that for $i \centernot{\in} J,$ we have that
\[
	\dM(x_{i},x_{i+1})
	\leq
	\dM(x_i,u)
	+\dM(u,v)
	+\dM(v,x_{i+1})
	\leq 2r_i + 2\Delta.
\]
For $i \in J$ we have
by the triangle inequality, $2r_i \geq \dM(x_i,x_{i+1}).$
\begin{align*}
	\dM(x_0,x_R) 
	&\leq
	\sum_{i=0}^{R-1}
	\dM(x_i,x_{i+1}) \\
	&\leq
	\sum_{i \centernot{\in} J} 
	\dM(x_i,x_{i+1}) 
	+\sum_{i \in J} 
	\dM(x_i,x_{i+1}) \\
	&\leq
	2\Delta \cdot R
	+
	\sum_{i=0}^{R-1} 
	2r_i.
\end{align*}
Hence provided that $\dM(x_0,x_R)/R$ is large, so too will be the sum of radii of these balls.

We now show that the centers of these balls are mostly separated from one another.  In fact, by this construction, it could be that the centers of $V_i$ and $V_{i+1}$ are close.  However, it will transpire that if $i+1 < \ell,$ then $V_i$ and $V_{\ell}$ have centers separated by at least $\Delta/2.$  

If $i \in J,$ the center of $V_i,$ which is $z_i,$ is contained in $\PV(x_i) \cap \PV(x_{i+1}).$  For $V_{\ell},$ its center $z_\ell$ is within distance $\Delta/2$ of $\PV(x_{\ell+1}).$  Hence if $\dM(z_i,z_\ell) < \Delta/2,$ the distance between $\PV(x_i)$ and $\PV(x_{\ell+1})$ is at most $\Delta.$  In this case the path
$x_0x_1x_2 \cdots x_i x_{\ell+1} x_{\ell+2} \cdots x_R$ is a shorter path from $x_0$ to $x_R,$ contradicting that the path was a geodesic.  The same proof works if $\ell \in J,$ now using that $z_{\ell}$ is in $\PV(x_{\ell+1}).$

Suppose that both of $i$ and $\ell$ are not in $J.$  Then by construction $z_i$ and $z_\ell$ are at most distance $\Delta/2$ from $\PV(x_i)$ and $\PV(x_{\ell+1})$ respectively.  If $\dM(z_i,z_\ell) < \Delta/2,$ then their midpoint is contained in some $\PV(y).$  The distance between $\PV(y)$ and $\PV(x_i)$ is at most $3\Delta/4,$ as is the distance between $\PV(y)$ and $\PV(x_{\ell+1}).$  Hence the path
$x_0x_1x_2 \cdots x_i y x_{\ell+1} x_{\ell+2} \cdots x_R$ is a shorter path from $x_0$ to $x_R,$ contradicting that the path was a geodesic.

Both of the collections
$\left\{ V_i \right\}_{i~\text{odd}}$
and
$\left\{ V_i \right\}_{i~\text{even}}$
have centers that are $\Delta/2$ separated. Let $I$ be the whichever of the odd integers or the even integers has a larger sum of radii, and let $I_M$ be the subcollection of $I$ for which all $r_i > M.$  Then
\begin{equation}
	\dM(x_0,x_R) 
	\leq 2\Delta \cdot R + 4\sum_{i\in I} r_i
	\leq (2\Delta +M)\cdot R + 4\sum_{i\in I_M} r_i.
	\label{eq:radii_lower}
\end{equation}

Let $y_0 \in \Lambda$ be a point so that $x_0 \in \mathcal{L}(y),$ and let $y_R \in \Lambda$ be a point so that $x_R \in \mathcal{L}(y).$  Also let $y_i \in \Lambda$ be a point so that $z_i \in \mathcal{L}(y).$ By quasi-isometry, we have that for $1 \leq i < R -1,$
\begin{align*}
	\dL(y_i,y_{i+1})
	&\leq C\dM(z_i,z_{i+1}) + C \\
	&\leq C(\dM(z_i,x_i) + \dM(z_{i+1},x_{i+1})  + \dM(x_i,x_{i+1}))+C \\
	&\leq C(r_i + r_{i+1} + \Delta  + 2r_i+2\Delta)+C.
\end{align*}
A similar bound holds for the end points, and we conclude that
\begin{equation}
	\sum_{i=0}^{R-1} 
	\dL(y_i,y_{i+1})
	\leq CR 
	+ 
	C
	\sum_{i=0}^{R-1} 
	r_i
	\leq C(1+M)R + C\sum_{i\in I_M} r_i,
	\label{eq:path_length_bound}
\end{equation}
for some constant $C$ depending on $\Delta.$

By \eqref{eq:lattice_balls},
there is an $M$ and a $D$ sufficiently large so that
for each $i \in I_M$ there is a ball $W_i = \LB(y_i,\rho_i)$ where $\rho_i = r_i/D$ with
\[
\MB(z_i,\rho_i/D) 
\subseteq
\sum_{y \in W_i} \mathcal{L}(y) \subseteq \MB(z_i,r_i).
\]
Hence we have a lattice path $\gamma \in \mathscr{L}$ connecting $y_0$ to $y_R,$ and a collection of lattice balls $\left\{ W_i \right\}_{i \in I_M}$ centered at some points on this path with the property that   
\begin{align*}
	\VolM(\mathcal{L}(0))
	\cdot
	\left|
	\cup_{i \in I_M} W_i
	\right|
	&=
	\VolM( \cup_{i \in I_M} \cup_{y \in W_i} \mathcal{L}(y)) \\
	&\geq
	\VolM( \cup_{i \in I_M} \MB(z_i,r_i/D^2)) \\
	&\geq \sum_{i \in I_M}
	\frac{e^{\alpha r_i/D^2}}{C}
\end{align*}
provided $M > \Delta/4$
by Lemma~\ref{lem:YS}.  
By convexity
\[
	\frac{|I_M|}{|I_M|}
	\sum_{i \in I_M}
	e^{\alpha r_i/D^2}
	\geq
	|I_M|\exp\left( \alpha \frac{1}{|I_M|}\sum_{i \in I_M} r_i/D^2\right).
\]
We conclude that for $M$ sufficiently large, we have
\begin{equation}
	\left|
	\cup_{i \in I_M} W_i
	\right|
	\geq \frac{|I_M|}{C} \exp\left(\alpha \frac{\sum_{i \in I_M} \rho_i}{D|I_M|}\right).
	\label{eq:lattice_volume_sum}
\end{equation}


Let $\ell$ denote the length of $\gamma,$
and let $k = |I_M|.$
The number of paths of length $\ell$ is $C^\ell,$ as $\mathscr{L}$ is a regular graph.
The number of possible ways to choose which vertices will be centers of $W_i$ is at most $\binom{\ell+k}{k}.$  Let $s= \frac{\sum_{i \in I_M} \rho_i}{Dk}.$  Then since for some constant $C>0,$ 
\(
\ell \leq Ck(1+s),
\)
we have that the probability that $(\gamma, \left\{ W_i \right\}_{i \in I_M})$ could exist for some particular choice of $(\rho_i)$ is
\begin{align*}
	\Pr
	\left[
		\exists (\gamma, \left\{ W_i \right\}_{i \in I_M})
		\text{ for a specific }
		\left( \rho_i \right)_{i=1}^k
	\right]
	&\leq \sum_{\ell = 1}^{Ck(1+s)}
	(C)^{\ell}
	\binom{\ell+k}{k}
	p^{\frac{k}{C} e^{\alpha s}}, \\
	\intertext{where $p$ is the probability that a cell of $\mathscr{L}$ is empty.  Adjusting constants, we have that } 
	\Pr
	\left[
		\exists (\gamma, \left\{ W_i \right\}_{i \in I_M})
		\text{ for a specific }
		\left( \rho_i \right)_{i=1}^k
	\right]
	&\leq e^{ 
	k( C(1+s)
	-e^{\alpha s}/C )}.
\end{align*}

This we can now sum over all choices of integers $(\rho_i)_{i=1}^k$ with $s > s_0 = t_0R/k$ to conclude
\begin{align*}
	\Pr
	\left[
		\exists (\gamma, \left\{ W_i \right\}_{i \in I_M}), |I_M|=k
	\right]
	&\leq
	\sum_
	{
		\substack{\left( \rho_i \right) \in \mathbb{N}_0^k
		\\ 
	s \geq s_0}
}
e^{ 
	k( C(1+s)
	-e^{\alpha s}/C + C)}\\
	&\leq
	\int
	\limits
	_
	{\substack{\left( \rho_i \right) \in \mathbb{R}^R \\ 
	s \geq s_0}}
		Ce^{ 
	k( C(1+s)
	-e^{\alpha s}/C + C)}\,d\rho_1d\rho_2\cdots d\rho_k.\\
	\intertext{
	The simplex $\left\{ \left( \rho_i \right)_{i=1}^k, \rho_i \geq 0, \sum_{i=1}^k \rho_i = Dks \right\}$ has Euclidean volume $O((Cs)^k).$  Hence we
	can change the integration to be over $s,$ so that after adjusting constants, we have}
	\Pr
	\left[
		\exists (\gamma, \left\{ W_i \right\}_{i \in I_M}), |I_M|=k
	\right]
&\leq
	\int
	\limits_{s_0}^\infty
	e^{ 
	k( C(1+s+\log s)
	-e^{\alpha s}/C )}\,ds.
\end{align*}
By \eqref{eq:volume},
provided that $t_0$ is chosen sufficiently large, we then get that this integral converges and is dominated by its value at $s_0.$  Summing over $k$ and adjusting constants, we get
\begin{equation}
	\Pr
	\left[
		\exists (I_M,\gamma, \left\{ W_i \right\}_{i \in I_M}
		)
		\text{ so that }
		{\sum_{i \in I_M} \rho_i} > tD|I_M|
	\right]
	\leq
	e^{ 
	R( C(1+t)
	-e^{\alpha t}/C )}
	\label{eq:finalbound}
\end{equation}
for all $t \geq t_0.$

Hence, with the same probability, by \eqref{eq:radii_lower}, we have
\begin{equation}
	\dM(x_0,x_R) 
	\leq (2\Delta +M)\cdot R + 4\sum_{i\in I_M} \rho_i D
	\leq (2\Delta +M + 4D^2t)\cdot R.
	\nonumber
\end{equation}
This proves the first part of the proposition.

To prove the second part of the proposition, note that by the first part, we can find some $t,\beta,$ and $R_0$ so that for all $R>R_0,$
\[
\Pr
		  \left[ 
			  B_{G}(o,R)
			  \centernot{\subseteq}
			  \MB(x,tR)
		  \right]
		  \leq e^{-\beta R}.
\]
On the event that
\(
 B_{G}(o,R) \subseteq \MB(x,tR),
\)
we have that $|B_{G}(o,R)| \leq | \PPP \cap \MB(x,tR)|.$  And by Poisson tails, we have that
\[
	\Pr [
| \PPP \cap \MB(x,tR)| \geq 2\lambda f(tR)
] \leq e^{-\lambda f(tR)/C}
\]
for some constant.  Hence, combining this with the previous bound and adjusting constants gives the second consequence.  The third part of the proposition follows immediately by Borel-Cantelli.

\end{proof}

\begin{proposition}
  When $\mathbb{M}$ is a Riemannian symmetric space of noncompact type, the embedded Delaunay network $(G,o,\psi)$ with nuclei $\Pi^\lambda$ is a random weak limit of finite random networks.
  \label{prop:sofic}
\end{proposition}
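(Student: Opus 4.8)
The plan is to realise $(G,o,\psi)$ as the local weak limit of Poisson--Voronoi tessellations on a sequence of compact quotients of $\mathbb{M}$. By Borel's theorem \cite{Borel} the isometry group of $\mathbb{M}$ admits a cocompact lattice; by Selberg's lemma we may pass to a torsion-free cocompact lattice $\Gamma$, and since $\Gamma$ is finitely generated and, up to finite index, linear, Malcev's theorem on residual finiteness provides a descending chain $\Gamma = \Gamma_0 \triangleright \Gamma_1 \triangleright \cdots$ of finite-index \emph{normal} subgroups with $\bigcap_n \Gamma_n = \{e\}$. Put $M_n \Def \Gamma_n\backslash\mathbb{M}$, a closed Riemannian manifold with $\operatorname{vol}(M_n) = [\Gamma:\Gamma_n]\operatorname{vol}(M_0) \to\infty$. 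On each $M_n$ run a Poisson process $\Pi^\lambda_n$ of intensity $\lambda$ times Riemannian volume; it is almost surely finite since $M_n$ is compact, and the embedded Delaunay graph $G_n$ on the nuclei $\Pi^\lambda_n$ --- with vertex and edge marks given, near the root, by the positions and geodesic midpoints read off in a local lift of a neighbourhood of the root to $\mathbb{M}$, and set to an arbitrary base point elsewhere --- rooted at a uniformly chosen $v_n\in\Pi^\lambda_n$ is a uniformly rooted finite random network $(G_n,v_n,\psi_n)$. I claim $(G_n,v_n,\psi_n)$ converges in the local weak sense to $(G,o,\psi)$, the embedded Delaunay network of $\Pi^\lambda$ from Theorem~\ref{thm:network} --- that is, of the Palm process $\Pi^\lambda_o \lawequals \{o\}\cup\Pi^\lambda$ rooted at $o$ --- which is exactly the assertion of the proposition.

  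The first ingredient is that the injectivity radius of $M_n$ tends to infinity. Since $\Gamma$ acts properly and cocompactly on the Hadamard manifold $\mathbb{M}$, every $\gamma\in\Gamma\setminus\{e\}$ is semisimple \cite[II.6.10]{BridsonHaefliger}, hence --- being of infinite order, as $\Gamma$ is torsion-free --- hyperbolic, with a positive attained translation length $\ell(\gamma) = \min_{x\in\mathbb{M}}\dM(x,\gamma x)$. For each $L>0$, conjugating an element of translation length $<L$ so that it moves a fixed compact fundamental domain $D$ to within $L$ of $D$, and invoking proper discontinuity, shows that only finitely many $\Gamma$-conjugacy classes contain an element of translation length $<L$. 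Fixing $L$, for $n$ large none of these finitely many classes meets $\Gamma_n$ (as $\bigcap_n\Gamma_n=\{e\}$), so by normality $\Gamma_n$ contains no element of translation length $<L$; hence $\operatorname{sys}(M_n)\ge L$ for $n$ large, and therefore $\min_{x\in M_n}\operatorname{inj}_x(M_n)\to\infty$. In particular, for every $\rho>0$ there is $n(\rho)$ such that for $n\ge n(\rho)$ and every $p\in M_n$ the ball $B_{M_n}(p,\rho)$ is, via a chart sending $p$ to $o$, isometric to $\MB(o,\rho)\subset\mathbb{M}$.

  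The second ingredient localises the limit network. Let $R^\star(r)$ be the least radius $\rho$ such that the $r$-ball around $o$ in $G$, together with its marks, is a deterministic function of $\Pi^\lambda_o\cap\MB(o,\rho)$; I claim $R^\star(r)<\infty$ almost surely. By Proposition~\ref{prop:expgrowth} the graph ball $B_G(o,r)$ is contained in some almost surely finite manifold ball $\MB(o,R_1)$. Each Delaunay edge among the finitely many nuclei in $\MB(o,R_1)$ is witnessed by an empty ball whose centre lies in the Voronoi cell of one of its endpoints --- bounded, by Lemma~\ref{lem:vtail} --- and whose radius is at most the diameter of that cell; each non-edge between two such nuclei is decided once their two (bounded) Voronoi cells are known, which again only requires the process in a bounded ball. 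Taking the maximum of these finitely many radii yields a finite $R^\star(r)$. Now fix $\epsilon>0$, choose $\rho$ with $\Pr[R^\star(r)>\rho]<\epsilon$, and for $n\ge n(\rho)$ couple $\Pi^\lambda_n\cap B_{M_n}(v_n,\rho)$, transported by the chart above to $\MB(o,\rho)$, to agree with $\Pi^\lambda_o\cap\MB(o,\rho)$; because $\operatorname{inj}_{v_n}(M_n)>\rho$ there is no ``wraparound'', so on the event $\{R^\star(r)\le\rho\}$ the depth-$r$ Delaunay structure around $v_n$, with its marks, is computed identically in $M_n$ and in $\mathbb{M}$, and the $r$-truncated rooted networks coincide with probability $>1-\epsilon$. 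Finally, rooting $G_n$ at a uniformly chosen vertex differs in total variation by $o(1)$ from the ``Palm'' recipe --- take $p$ uniform on $M_n$ and root $\Del(\{p\}\cup\Pi^\lambda_n)$ at $p$, which is precisely the coupled description --- because the Radon--Nikodym factor relating the two laws is $|\Pi^\lambda_n|/\Exp|\Pi^\lambda_n|$, which tends to $1$ in $L^1$ as $\operatorname{vol}(M_n)\to\infty$. Letting $n\to\infty$, then $\epsilon\to0$, and recalling that $r$ was arbitrary gives the claimed local weak convergence.

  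The step I expect to be the main obstacle is this localisation: showing rigorously that a finite neighbourhood of the Poisson--Delaunay network is a function of the point process in an almost surely bounded region, and that this function is unaffected by replacing $\mathbb{M}$ with $M_n$ once the injectivity radius exceeds the relevant scale. Proposition~\ref{prop:expgrowth} already controls the containment of graph balls in metric balls; what remains is the Voronoi-geometric bookkeeping certifying both edges and --- the more delicate point --- non-edges inside a bounded ball, together with a careful comparison of Voronoi cells computed in $\mathbb{M}$ versus in the quotient $M_n$.
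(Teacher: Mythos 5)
Your proposal is correct and follows the same architecture as the paper's proof: approximate $\mathbb{M}$ by a sequence of finite-volume spaces that are isometric to $\mathbb{M}$ on balls of larger and larger radius, run a Poisson--Voronoi tessellation on each, root at a uniform nucleus, and reduce local weak convergence to a localization statement saying that the depth-$r$ Delaunay neighbourhood of the root (edges, non-edges, and marks) is a measurable function of the point process in an almost surely bounded metric ball.

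The one substantive difference is how the approximating spaces are obtained. The paper simply cites the existence of a family $\{S_r\}$ in which every $r$-ball is isometric to $\MB(0,r)\subseteq\mathbb{M}$, whereas you construct compact quotients $M_n=\Gamma_n\backslash\mathbb{M}$ from a residually finite torsion-free cocompact lattice (Borel, Selberg, Malcev) and prove $\operatorname{inj}(M_n)\to\infty$ via the systole, using semisimplicity of the isometries and the finiteness of conjugacy classes of bounded translation length. This buys a self-contained proof of the key geometric input at the cost of a page of geometric group theory; it is essentially the standard construction behind the cited result. Your localization step is the same in spirit as the paper's event $\mathcal{E}_r$ (all Voronoi cells in the graph ball have diameter $\le q_r$, an event measurable with respect to the process in $\MB(o,q_r(r+2))$); your version via $R^\star(r)$, star-shaped cells, and witnessing empty balls is equally valid, and you correctly flag that certifying \emph{non}-edges in the quotient is the delicate point -- the resolution being that any witnessing empty ball for an edge at $x$ has its centre in the Voronoi cell of $x$, hence lies in a region already controlled by the coupling. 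Finally, you handle the discrepancy between uniform rooting and Palm rooting explicitly via the size-biasing factor $|\Pi^\lambda_n|/\Exp|\Pi^\lambda_n|\to 1$ in $L^1$; the paper's proof passes over this point silently, so your treatment is, if anything, the more careful of the two.
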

\begin{remark}
  This was observed earlier in~\cite[``Hyperbolic Surfaces'' proof of Theorem 6.2]{BenjaminiSchramm01}.
\end{remark}
\begin{proof}
  The core of the proof is the existence of the following family of spaces (see \cite[Theorem 2.1]{dGW}).
  There is a family of Riemannian manifolds $\{S_r\}_{r=1}^\infty$ so that $S_r$ has the property that any ball of radius $r$ in $S_r$ is isometric to $\MB(0,r) \subseteq \mathbb{M}.$ Hence, on $S_r$ we can define a Poisson point process $\PPP_{S_r}$ whose intensity measure on any ball of radius $r$ is the pullback of the intensity of $\PPP$ on $\MB(0,r).$  We can also associate to $\PPP_{S_r}$ its associated Voronoi tessellation, and we define $(G_r,\rho_r,\psi_r)$ to be the embedded Delaunay network, where $\rho_r$ is a uniformly chosen vertex of $G_r.$  We claim that $(G,o,\psi)$ is the local limit of $(G_r,\rho_r,\psi_r),$ i.e.\,$(G,o,\psi)$ is the random weak limit of $G_r.$  
  From the almost sure finiteness of the Voronoi cells with nuclei $\PPP$ on $\mathbb{M},$ we have that for each $r > 0,$
  \[
    \lim_{q \to \infty} 
    \Pr\left[ 
      \max_{x \in B_{G}(\rho,r)}
    \diamM(\PV(x)) > q
    \right] = 0.
  \]
  Hence by diagonalization, we can find some sequence $\left\{ q_r \right\}_{r=1}^\infty \subset \mathbb{N}$ with $q_r \to \infty$ so that 
 \[
    \lim_{r \to \infty} 
    \Pr\left[ 
      \max_{x \in B_{G}(\rho,r)}
    \diamM(\PV(x)) > q_r
    \right] = 0.
  \]
  Let $\mathcal{E}_r$ be the event 
  \(
  \left\{  \max_{x \in B_{G}(\rho,r)}
    \diamM(\PV(x)) \leq q_r\right\}.  
    \)
    On $\mathcal{E}_r,$ every nucleus $x \in B_{G}(\rho,r)$ is contained in $\MB(x_0, q_r \cdot (r+1)).$  Further, if $y \in \PPP$ is a neighbor of some $x \in B_{G}(\rho,r),$ then there is a point $z \in \PV(x) \cap \PV(y)$ that is equidistant to $x$ and to $y.$  Hence $\dM(x,y) \leq 2\dM(x,z) \leq 2q_r.$  In particular, the event $\mathcal{E}_r$ is measurable with respect to $\PPP \cap \MB(x_0,q_r \cdot(r+2)).$

    For every $r \in \mathbb{N},$ let
    \[
	    j(r) = \max \left\{ s \in \mathbb{N} : q_s \cdot (s+2) \leq r \right\}.
    \]
    Then we have that
    \[
    \lim_{r \to \infty} 
    \Pr\left[ 
      \max_{x \in B_{G_r}(\rho_r,j(r))}
      \operatorname{diam}_{S_r}(\PV(x)) > q_{j(r)}
    \right] = 0.
    \]
    Moreover, the law of $ B_{G_r}(\rho_r,j(r))$ on the event
    \(\max_{x \in B_{G_r}(\rho_r,j(r))}
      \operatorname{diam}_{S_r}(\PV(x)) \leq q_{j(r)}
      \)
      coincides with the law of $B_{G}(\rho,r)$ on the event $\mathcal{E}_{j(r)},$ from which the local weak convergence follows. 
      \end{proof}

      \bibliographystyle{amsalpha}
\bibliography{anchored}



\end{document}